\newtheorem{remark}[theorem]{Remark}
\newtheorem{example}[theorem]{Example}
\numberwithin{theorem}{section}
\newcommand{\TheTitle}{Analysis of a quasi-reversibility method for a terminal value quasi-linear parabolic problem with measurements} 
\newcommand{\TheAuthors}{N.H. Tuan, V.A. Khoa, and V.V. Au}
\title{{\TheTitle}\thanks{Submitted to the editors DATE.}
\funding{The work of the first author and the third author was supported by the Institute for Computational Science and Technology Ho Chi Minh City under the project named ``Some ill-posed problems for partial differential equations''. The work of the second author was supported by a postdoctoral fellowship of the Research Foundation-Flanders (FWO).}}
\author{
  Nguyen Huy Tuan\thanks{Institute for Computational Science and Technology Ho Chi Minh City, Vietnam, and Faculty of Mathematics and Computer Science, University of Science, Vietnam
  	National University, 227 Nguyen Van Cu, District 5, Ho Chi Minh City, Vietnam
    (\email{nhtuan@hcmus.edu.vn}).}
  \and
  Vo Anh Khoa\thanks{Author for correspondence. Institute for Numerical and Applied Mathematics, University of Goettingen, 37083 Goettingen, Germany, and Faculty of Sciences, Hasselt University, Campus Diepenbeek, Agoralaan Building D, BE3590 Diepenbeek, Belgium (\email{vakhoa.hcmus@gmail.com}).}
  \and
  Vo Van Au\thanks{Institute for Computational Science and Technology Ho Chi Minh City, Vietnam, and Faculty of General Sciences, Can Tho University of Technology, Can Tho City, Vietnam (\email{vvau@ctuet.edu.vn}).}
}
\begin{document}

\maketitle

\begin{abstract}
  This paper presents a modified quasi-reversibility method for computing the exponentially unstable solution of a nonlocal terminal-boundary value parabolic problem with noisy data. Based on data measurements, we perturb the problem by the so-called filter regularized operator to design an approximate problem.  Different from recently developed approaches that consist in the conventional spectral methods, we analyze this new approximation in a variational framework, where the finite element method can be applied. To see the whole skeleton of this method, our main results lie in the analysis of a semi-linear case and we discuss some generalizations where this analysis can be adapted. As is omnipresent in many physical processes, there is likely a myriad of models derived from this simpler case, such as source localization problems for brain tumors and heat conduction problems with nonlinear sinks in nuclear science.
  With respect to each noise level, we benefit from the Faedo-Galerkin method to study the weak solvability of the approximate problem. Relying on the energy-like analysis, we provide detailed convergence rates in $L^2$-$H^1$ of the proposed method when the true solution is sufficiently smooth. Depending on the dimensions of the domain, we obtain an error estimate in $L^{r}$ for some $r>2$. Proof of the backward uniqueness for the quasi-linear system is also depicted in this work. To prove the regularity assumptions acceptable, several physical applications are discussed.
\end{abstract}

\begin{keywords}
  Quasi-linear parabolic problems, Ill-posed problems, Uniqueness, Faedo-Galerkin method, Quasi-reversibility method, Convergence rates.
\end{keywords}

\begin{AMS}
  65J05, 65J20, 35K92
\end{AMS}

\section{Introduction}
\subsection{Background of the terminal value model}\label{subsec:background}
This paper is concerned with a general construction of a modified quasi-reversibility method for a  quasi-linear parabolic reaction-diffusion system of the following form
\begin{equation}
u_{t}+\nabla\cdot\left(-a\left(x,t;u;\nabla u\right)\nabla u\right)=F\left(x,t;u;\nabla u\right)\quad\text{in }Q_{T}:=\Omega\times\left(0,T\right),
\label{Pb1}
\end{equation}
where the vector of concentrations $u = u(x,t)\in \mathbb{R}^{N}$ is unknown with $N\in \mathbb{N}^{*}$ being the number of equations involved in \eqref{Pb1}.
Here, the domain of interest $\Omega \subset \mathbb{R}^{d}$ for $d\in \mathbb{N}^{*}$ and the final time of observation $0<T<\infty$ are assumed. Furthermore, $\Omega$ is open, connected and bounded with a sufficiently smooth boundary $\partial\Omega$. The nonlocal diffusion coefficient $a \in \mathbb{R}^{N\times N}$ and the nonlinearity $F \in \mathbb{R}^{N}$ are explicitly density- and gradient-dependent.

As met in practical applications, we associate \eqref{Pb1} either with the homogeneous Dirichlet boundary condition ($u = 0$ on $\partial\Omega$) or with the homogeneous Neumann boundary condition ($-a(x,t;u;\nabla u)\nabla u\cdot \text{n} = 0$ on $\partial\Omega$). Given the terminal data
\begin{equation}
	u\left(x,T\right)=u_{f}\left(x\right)\quad\text{in }\Omega,
	\label{terminal}
\end{equation}
we would like to seek in this work the initial value $u\left(x,0\right)=u_{0}\left(x\right)$ in a stable way since the solution to this type of problems is highly unstable (cf. e.g. \cite{Kaba08}).

The motivation behind the consideration of \eqref{Pb1}-\eqref{terminal} basically follows the identification of source location for brain tumor that has been investigated in \cite{JBAJ16}. It is worth mentioning that reconstructing the initial densities of tumor cells provides a substantial contribution to predicting tumorigenicity in connection with genetic events (see this possible correlation studied in e.g. \cite{Zlat01}). The spirit of studying the terminal data \eqref{terminal} also arises in the theory of Kolmogorov backward equations, carried out by nonlocal transformations in \cite{BS04} e.g., to  integrate the expected value of the payout from future values. Therefore, the problem under consideration here is viewed as a prototypical framework which can be adapted to particular applicable contexts and be extended to other theoretical approaches.

The existing literature on quasi-linear reaction-diffusion systems is very huge to be singled out here. Since the diffusion tensor $a$ in \eqref{Pb1} is nonlinear with included self- and/or cross diffusion types, there are of course numerous distinctive aspects concerning different types of forward problems considered here. For example, discussions on well-posedness, spectrum analysis and behaviors of travelling waves have been detailed in \cite{MRS14} and references cited therein, see e.g. \cite{SS01,Mey11,CD00}. We also wish to mention here the works \cite{CGT17,Wei92,HR08} for addressing more complex scenarios related to either theoretical or numerical standpoints of reaction-diffusion type systems.

\subsection{Goals and novelty}
The purpose here is to follow up on our earlier work \cite{TAKL17}, where we have proposed a regularization strategy in the vein of the classical \emph{quasi-reversibility} (QR) method which specifically solves ill-posed problems of elliptic and parabolic types. Observe that the identification of population density for a single-species model in \cite{TAKL17} is well-suited to the concept of source localization. In this sense, behaviors of the tumor cell density are influenced not only by certain proliferation and/or extinction rates, but also by their transport processes with convection/advection, and the total population in local movements. The novelty we present here is the careful adaptation of the \emph{filter regularized operator}, which we have briefly studied in \cite{DNKV17}, to the modified QR method in a variational framework. Remarkably, this setting enables us to interact with certain reaction-diffusion problems with spatial nonlinear diffusion; compared to our spectral-based regularization methods (cf. e.g. \cite{TAKL17,TQ11,2010}) that have been studied so far.

The motivation for using the QR type method stems from our wish to design a regularization approach that can deal with a quite general class of parabolic problems due to the limitation of regularization theory. As is known, regularization of many simpler models has been deduced so far, such as the heat conduction problem (e.g. \cite{TDMK15,RE97} and references therein), the parabolic problems in image restoration (cf. \cite{Cara13,Cara76,Cara78,Cara94}), the Burger equation in fluid mechanics \cite{Cara77} and even the Navier-Stokes equation \cite{KP68}. However, it is impossible to find papers working on time-reversed quasi-linear systems \eqref{Pb1}-\eqref{terminal} except our previous work \cite{TAKL17} that has been mentioned above. Our major contribution here is thus coping with problems that remain unsolved until now. We stress that \eqref{Pb1} includes not only popular semi-linear types (e.g. equations/systems named Fitzhugh-Nagumo, Fisher-KPP, Zeldovich, Lengyel-Epstein, de Pillis-Radunskaya and  Frank-Kamenetsky; see \cite{MOEP16,EK14,DAAF16,CQ04,BE89,Pao92}
for the background of deterministic models), but also certain nonlocal types in e.g. \cite{RS92,Lan16,CKK16,KLW17}. In principle, our mathematical results derived here are helpful in fostering interests in the branch of inverse and ill-posed problems for partial differential equations. Alternative approaches to design a regularized
problem can be the quasi-boundary value method (commenced in \cite{Show83}
and numerically discussed in \cite{KTDT17} e.g.), the truncation
method (see, e.g. \cite{2010,RR06}) and the recently developed Tikhonov method based on Carleman weight functions (cf. \cite{Kli15}). In addition, backward problems with
impulsive and random noise have been investigated in \cite{KWH16} by the generalized
Tikhonov regularization and in \cite{KNT17} by a QR-based statistical approach, respectively.

We accentuate that this work is not aimed to improve the conventional convergence rates of this method, but to complete the theoretical error analysis of this direction. Together with rigorous $L^2$-$H^1$ error estimates, we obtain an $L^{r}$-type rate ($r>2$) of convergence, which we believe that this is the first time it is explored in this direction. Theoretically, our work also unravels the problem of finding the global-in-time error estimate. To be more specific, we recall the analysis in \cite{Klibanov2015}, where a linear case of \eqref{Pb1} was considered through a version of the QR method. Proofs of the stability and error estimates in \cite{Klibanov2015} are concretely based on the massive Carleman estimate, but it is well known that this method often requires $T$ sufficiently small; see \cite[Theorem 5.1]{Klibanov2015}. This price is also manifested here when we prove the backward uniqueness result for \eqref{Pb1} using a Carleman-type estimate with a suitable non-increasing weight function; see \cref{lem:3.1}. Here, the rates of convergence we obtain for the semi-linear case are similar to \cite[Theorem 5.4]{Klibanov2015}, but are uniform in time, requiring a very high smoothness of the true solution somewhat in terms of Gevrey spaces. To prove this, an exponentially decreasing weight function is used to get rid of large parameters appearing in the difference problem. Essentially, this largeness is driven by the magnitude stability of the regularized problem, which goes to infinity when the measurement parameter tends to zero; see e.g. \cite{Nam10} for various types of the magnitude in the past. In accordance with the existence result of the regularized problem, this way the proofs of convergence would be simpler using a large amount of energy-like estimates.

\subsection{Contemporary history of the QR method}
The QR method was first proposed by Latt\`es and Lions in the monograph \cite{LL67}. This method, when applied to the context of linear backward parabolic problems, basically perturbs the spatial second-order operator by the addition of a fourth-order term. It is, on the other hand, going with a leading parameter which is positive and small enough to get the convergence. Additionally, the sign of this extra term is chosen such that the perturbed/regularized problem is well-posed with respect to the leading parameter, as time evolves back to the initial point. In the community of regularization, this parameter is referred to as the \emph{regularization parameter}. Let us also note that since our work aims to prepare the playground to handle real-world models, the presence of noise on the terminal data is evident. Accordingly, the smallness of the regularization parameter here depends strictly on such noise levels, which makes our scheme applicable in reality.

Having massive research interests for five decades, the literature of the QR method and its modifications (e.g. the stabilized QR method in \cite{Mill67}), nowadays, is vast from the vantage point of theoretical and numerical analysis. As some of concrete references for elliptic equations, a dual-based QR method for the Cauchy problem with noisy data is designed
in \cite{BD10} and some numerical approaches have been postulated in \cite{CKP09}. On the other hand, the error analysis is very attractive and has been investigated, for example, in \cite{KS91} with
the H\"older-type rate and in \cite{BD09} with a logarithmic rate. This method is also extended to deal with inverse problems for parabolic and hyperbolic equations; see e.g. \cite{CK07,Kliba05} for a brief overview of this field with sharp error estimates and convergence results in $H^1$. On top of that, the reader can be referred to \cite{Klibanov2015} as a survey of applications of Carleman estimates to proofs of convergence of the QR method for a wide class of ill-posed problems for PDEs.

\subsection{Outline of the paper}
From a mathematical point of view, the nonlinearities $a$ and $F$ involved in \eqref{Pb1} are undoubtedly the major challenges. Here we aim at showing the general setting of the QR method and thereupon explaining the ideas on a simpler case while leaving the more general case of \eqref{Pb1} to future works in this inception stage. For this reason, we introduce in \eqref{reguproblem} the regularized problem for the general system \cref{Pb1}, while we reduce ourselves to the analysis of a semi-linear case with single-species mode. In this regard, we will not also detail any further technical assumption of the diffusion tensor $a$, except the ellipticity condition in the general form of \eqref{Pb1} that serves the convergence analysis. Notice that proofs of our main results are done with the zero Dirichlet boundary condition, which plays a key role in the variational framework we choose. As shortly discussed in the last part of \cref{sec:remarks} these results are also obtainable for the zero Neumann boundary condition.

Except the notation and necessary assumptions on the input of the problem in the general form are present in \cref{sec:2}, our main themes in this paper can be summarized as follows:
\begin{itemize}
	\item  Detailed settings of the modified QR method and weak solvability of the regularized problem are studied in \cref{sec:4}; see \cref{existencetheorem} and \cref{posi}. 
	\item Detailed rates of convergence are obtained in \cref{sec:5}, where the main results are reported in \cref{thm:Estimate1}, \cref{thm:Estimate2} and \cref{cor:5.3}, respectively.
	\item Some particular extensions follow in \cref{sec:remarks}, including the uniqueness result for the system \eqref{Pb1}-\eqref{terminal}.
\end{itemize}
Finally, some working applications are present in \cref{appendix1}.


\section{Preliminaries}
\label{sec:2} 

Let $\left\langle \cdot,\cdot\right\rangle $ be either the scalar
product in $L^{2}$ or the dual pairing of a continuous linear functional
and an element of a function space. The notation $\left\Vert \cdot\right\Vert _{X}$
stands for the norm in the Banach space $X$. We call $X'$ the dual
space of $X$. We denote by $L^{p}\left(0,T;X\right)$, $1\le p\le\infty$
for the Banach space of real-valued functions $u:\left(0,T\right)\to X$
measurable, provided that
\[
\left\Vert u\right\Vert _{L^{p}\left(0,T;X\right)}=\left(\int_{0}^{T}\left\Vert u\left(t\right)\right\Vert _{X}^{p}dt\right)^{1/p}<\infty\quad\text{for }1\le p<\infty,
\]
while
\[
\left\Vert u\right\Vert _{L^{\infty}\left(0,T;X\right)}=\underset{t\in\left(0,T\right)}{\text{ess sup}}\left\Vert u\left(t\right)\right\Vert _{X}<\infty\quad\text{for }p=\infty.
\]
We denote the norm of the function space $C^{k}\left(\left[0,T\right];X\right)$,
$0\le k\le\infty$ by
\[
\left\Vert u\right\Vert _{C^{k}\left(\left[0,T\right];X\right)}=\sum_{n=0}^{k}\sup_{t\in\left[0,T\right]}\left\Vert u^{\left(n\right)}\left(t\right)\right\Vert _{X}<\infty.
\]
We denote by $H_{0}^{1}\left(\Omega\right)$ for the Hilbert space
of weakly differentiable functions $u:\Omega\to\mathbb{R}$ that vanishes
on the boundary in the sense of trace. On the other hand, $W^{p,q}\left(\Omega\right)$
for $p\in\mathbb{N}$ denotes the Sobolev space of functions with
index of differentiability $p$ and of integrability $q$ (if $q\in\mathbb{N}$)
or, in the case $q=\infty$, whose essential supremum exists.

Depending on the situation, we  denote by $\left|\cdot\right|$ either the absolute value of a function or the finite-dimensional Euclidean norm of a vector. There are several assumptions needed for the analysis below:

$\left(\text{A}_{1}\right)$ The diffusion tensor $a = \left(a_{ij}\right)_{1\le i,j\le N}$ is such that the mapping $\left(\mathbf{p},\mathbf{q}\right)\mapsto a\left(x,t;\mathbf{p};\mathbf{q}\right)$ is continuous for $\left(\mathbf{p},\mathbf{q}\right)\in [L^{2}\left(\Omega\right)]^{N}\times\left[L^{2}\left(\Omega\right)\right]^{Nd}$ and the mapping  $(x,t)\mapsto a\left(x,t;\mathbf{p};\mathbf{q}\right)$ is continuously differentiable for $(x,t)\in \overline{Q_{T}}$ . Moreover, there exists a positive constant $\overline{M}$ such that
\[
0<\sum_{i,j=1}^{N}a_{ij}\left(x,t;\mathbf{p};\mathbf{q}\right)\xi_{i}\xi_{j}\le\overline{M}\left|\xi\right|^{2}\,\text{for all }\xi\in\mathbb{R}^{N},\left(\mathbf{p},\mathbf{q}\right)\in [L^{2}\left(\Omega\right)]^{N}\times\left[L^{2}\left(\Omega\right)\right]^{Nd}.
\]

$\left(\text{A}_{2}\right)$ There exists a tensor $A(x,t;\mathbf{p},\mathbf{q})\in \mathbb{R}^{N\times N}$ such that $A_{ij} = \overline{M} - a_{ij}$ for $1\le i,j\le N$. Then there exists a positive constant $\underline{M}$ satisfying
\[
0<\underline{M}\left|\xi\right|^{2} \le \sum_{i,j=1}^{N}A_{ij}\left(x,t;\mathbf{p};\mathbf{q}\right)\xi_{i}\xi_{j}\le\overline{M}\left|\xi\right|^{2},
\]
for all $\xi\in\mathbb{R}^{N},\left(\mathbf{p},\mathbf{q}\right)\in [L^{2}\left(\Omega\right)]^{N}\times\left[L^{2}\left(\Omega\right)\right]^{Nd}$.

$\left(\text{A}_{3}\right)$ For any $\left(x,t\right)\in \overline{Q_T}$, the source function $F$ is measurable and locally Lipschitz-continuous in the sense that for $1\le i\le N$
\[
\left|F\left(x,t;\mathbf{p};\mathbf{q}\right)-F\left(x,t;\mathbf{r};\mathbf{s}\right)\right|\le L_{F}\left(\ell\right)\left(\left|\mathbf{p}-\mathbf{r}\right|+\left|\mathbf{q}-\mathbf{s}\right|\right),
\]
for $\max\left\{\left|\mathbf{p}\right|,\left|\mathbf{r}\right|,\left|\mathbf{q}\right|,\left|\mathbf{s}\right|\right\}\le \ell$  for some $\ell >0$.

$\left(\text{A}_{4}\right)$ There exists a measurement of $u_{f}$, denoted by $u_{f}^{\varepsilon}$, in $[L^{2}\left(\Omega\right)]^{N}$ such that
\[
\left\Vert u_{f}-u_{f}^{\varepsilon}\right\Vert _{[L^{2}\left(\Omega\right)]^{N}}\le\varepsilon,
\]
where $\varepsilon>0$ represents the noise level.

\begin{remark}\label{remarkphu}
	It follows from $\left(\text{A}_{3}\right)$ that we can take
	\begin{align*}
	L_{F}\left(\ell\right) & :=\sup\left\{ \frac{\left|F\left(x,t;\mathbf{p};\mathbf{q}\right)-F\left(x,t;\mathbf{r};\mathbf{s}\right)\right|}{\left|\mathbf{p}-\mathbf{r}\right|+\left|\mathbf{q}-\mathbf{s}\right|}\right.\\
	& :\left(x,t\right)\in Q_{T},\mathbf{p}\ne \mathbf{r},\mathbf{q}\ne \mathbf{s}\;\text{and }\left|\mathbf{p}\right|,\left|\mathbf{r}\right|,\left|\mathbf{q}\right|,\left|\mathbf{s}\right|\le\ell\bigg\} <\infty,
	\end{align*}
	for $\ell >0$. Moreover, we introduce the cut-off function $F_{\ell}$, as follows:
	\begin{equation}
	F_{\ell}\left(x,t;\mathbf{p};\mathbf{q}\right):=\begin{cases}
	F\left(x,t;\ell;\ell\right) & \text{if }\max_{1\le j\le N,1\le k\le d}\left\{ \mathbf{p}_{j},\mathbf{q}_{jk}\right\} \in\left(\ell,\infty\right),\\
	F\left(x,t;\mathbf{p};\mathbf{q}\right) & \text{if }\max_{1\le j\le N,1\le k\le d}\left\{ \mathbf{p}_{j},\mathbf{q}_{jk}\right\} \in\left[-\ell,\ell\right],\\
	F\left(x,t;-\ell;-\ell\right) & \text{if }\max_{1\le j\le N,1\le k\le d}\left\{ \mathbf{p}_{j},\mathbf{q}_{jk}\right\} \in\left(-\infty,-\ell\right).
	\end{cases}\label{eq:2.1-1}
	\end{equation}
	
	Due to the cut-off function, for any $\ell>0$
	it holds
	\begin{equation}
	\left|F_{\ell}\left(x,t;\mathbf{p};\mathbf{q}\right)-F_{\ell}\left(x,t;\mathbf{r};\mathbf{s}\right)\right|\le L_{F}\left(\ell\right)\left(\left|\mathbf{p}-\mathbf{r}\right|+\left|\mathbf{q}-\mathbf{s}\right|\right),\label{eq:2.1}
	\end{equation}
	for all $\left(x,t\right)\in Q_{T}$ and $\mathbf{p},\mathbf{r}\in [L^2(\Omega)]^{N},\mathbf{q},\mathbf{s}\in [L^2(\Omega)]^{Nd}$.
	
	The proof of (\ref{eq:2.1}) is trivial. For brevity, we sketch out the proof in the following cases and omit the details:
	
	\underline{Case 1:} $\max_{1\le j\le N,1\le k\le d}\left\{ \mathbf{p}_{j},\mathbf{q}_{jk}\right\} <-\ell$ and $\max_{1\le j\le N,1\le k\le d}\left\{ \mathbf{r}_{j},\mathbf{s}_{jk}\right\} <-\ell;$
	
	\underline{Case 2:} $\max_{1\le j\le N,1\le k\le d}\left\{ \mathbf{p}_{j},\mathbf{q}_{jk}\right\} <-\ell\le\max_{1\le j\le N,1\le k\le d}\left\{ \mathbf{r}_{j},\mathbf{s}_{jk}\right\} \le\ell;$
	
	\underline{Case 3:} $\max_{1\le j\le N,1\le k\le d}\left\{ \mathbf{p}_{j},\mathbf{q}_{jk}\right\} <-\ell<\ell\le\max_{1\le j\le N,1\le k\le d}\left\{ \mathbf{r}_{j},\mathbf{s}_{jk}\right\} ;$
	
	\underline{Case 4:} $-\ell<\max_{1\le j\le N,1\le k\le d}\left\{ \mathbf{p}_{j},\mathbf{q}_{jk}\right\} ,\max_{1\le j\le N,1\le k\le d}\left\{ \mathbf{r}_{j},\mathbf{s}_{jk}\right\} \le\ell;$
	
	\underline{Case 5:} $\max_{1\le j\le N,1\le k\le d}\left\{ \mathbf{p}_{j},\mathbf{q}_{jk}\right\} >\ell$ and $\max_{1\le j\le N,1\le k\le d}\left\{ \mathbf{r}_{j},\mathbf{s}_{jk}\right\} >\ell.$
\end{remark}




\section{General frameworks for the QR method}
\label{sec:4}


This is the moment to establish a regularized problem for the system \eqref{Pb1}-\eqref{terminal} with measured data $u_{f}^{\varepsilon}$. For $\varepsilon > 0$, we denote by $\beta:=\beta\left(\varepsilon\right)\in\left(0,1\right)$ the regularization parameter satisfying
\[
\lim_{\varepsilon\to0^{+}}\beta\left(\varepsilon\right)=0,
\]
and then consider the function $\gamma:\left[0,T\right]\times\left(0,1\right)$ such that for any $\beta>0$, there holds
\[
\quad\gamma\left(T,\beta\right)\ge 1,\quad\lim_{\beta\to0^{+}}\gamma\left(t,\beta\right)=\infty\quad\text{for all }t\in (0,T].
\]

Compared to \cite{DNKV17}, we do not require the fundamental multiplicative-like identities with respect to the first argument in $\gamma$. With the function $\gamma$ at hands, we define the following operators.
\begin{definition}[Perturbing operator]
	The linear mapping $\mathbf{Q}_{\varepsilon}^{\beta}:[L^{2}\left(\Omega\right)]^{N}\to [L^{2}\left(\Omega\right)]^{N}$ is said to be a perturbing operator if there exist a function space $\mathbb{W}\subset [L^{2}\left(\Omega\right)]^{N}$ and an $\varepsilon$-independent constant $C_0 > 0$ such that
	\begin{equation}
	\left\Vert \mathbf{Q}_{\varepsilon}^{\beta}u\right\Vert _{[L^{2}\left(\Omega\right)]^{N}}\le C_{0}\left\Vert u\right\Vert _{\mathbb{W}}/\gamma\left(T,\beta\right)\quad\text{for any }u\in\mathbb{W}.\label{eq:4.1-3}
	\end{equation}
\end{definition}

\begin{definition}[Stabilized operator]
	The linear mapping $\mathbf{P}_{\varepsilon}^{\beta}:[L^{2}\left(\Omega\right)]^{N}\to [L^{2}\left(\Omega\right)]^{N}$ is said to be a stabilized operator if there exists an $\varepsilon$-independent constant $C_1 > 0$ such that
	\begin{equation}
	\left\Vert \mathbf{P}_{\varepsilon}^{\beta}u\right\Vert _{[L^{2}\left(\Omega\right)]^{N}}\le C_{1}\log\left(\gamma\left(T,\beta\right)\right)\left\Vert u\right\Vert _{[L^{2}\left(\Omega\right)]^{N}}\quad\text{for any }u\in [L^{2}\left(\Omega\right)]^{N}.\label{eq:4.1-1}
	\end{equation}
\end{definition}


In principle, the way we define these two terms $\mathbf{P}_{\varepsilon}^{\beta}$ and $\mathbf{Q}_{\varepsilon}^{\beta}$ is in line with the classical quasi-reversibility method. In this sense, we obtain the regularized problem by adding the perturbing operator $\mathbf{Q}_{\varepsilon}^{\beta}$ to the original problem. Then the stabilized operator will be derived from this addition by a linear mapping, whenever the leading coefficients of operators, which are targeted to be stabilized, are essentially bounded. Hence, in this work we simply take $\mathbf{P}_{\varepsilon}^{\beta} := \overline{M}\Delta + \mathbf{Q}_{\varepsilon}^{\beta}$. Interestingly, this enables us to consider a very simple eigenvalue problem regardless of the complex structure involved in the diffusion coefficient.

At the moment, we do not know the optimal bounds of \eqref{eq:4.1-3} and \eqref{eq:4.1-1}, which are altogether related. We deliberately present the logarithmic stability estimate \eqref{eq:4.1-1} for the stabilized operator due to the typical logarithmic convergence usually obtained after the regularization of a backward parabolic model. In other words, this upper bound is essential and decisive in the convergence analysis in \cref{sec:5}. The decay behavior of  the  perturbing operator (cf. \eqref{eq:4.1-3}) is directly governed by the so-called source condition  that measures the high smoothness of the true solution. In the following example, we mimic the stochastic gradient descent algorithm in machine learning schemes to show the existence of these operators.

\begin{example}\label{exampleope}
	Consider $N=1$ for a single-species model. It is well known that for any bounded subset of $\mathbb{R}^{d}$ with a smooth boundary, there exists
	an orthonormal basis of $L^{2}\left(\Omega\right)$, denoted by $\left\{ \phi_{p}\right\} _{p\in\mathbb{N}}$,
	satisfying $\phi_{p}\in H_{0}^{1}\left(\Omega\right)\cap C^{\infty}\left(\overline{\Omega}\right)$
	and $-\Delta\phi_{p}\left(x\right)=\mu_{p}\phi_{p}\left(x\right)$ for $x\in \Omega$. The (Dirichlet and Neumann) eigenvalues $\left\{ \mu_{p}\right\} _{p\in\mathbb{N}}$ form an infinite sequence which goes to infinity, viz.
	\begin{equation}
	0\le \mu_{0}<\mu_{1}\le\mu_{2}\le...,\;\text{and }\lim_{p\to\infty}\mu_{p}=\infty.
	\label{prop}
	\end{equation}
	
	We choose
	\begin{equation}
	\mathbf{Q}_{\varepsilon}^{\beta}u=\frac{1}{T}\sum_{p\in \mathbb{N}}\log\left(1+\gamma^{-1}\left(T,\beta\right)e^{\overline{M}T\mu_{p}}\right)\left\langle u,\phi_{p}\right\rangle \phi_{p}\quad\text{for }u\in L^{2}\left(\Omega\right).\label{eq:exa}
	\end{equation}
	Using the elementary inequality $\log\left(1+a\right)\le a$
	for $a>0$, then by Parseval's identity it gives
	\begin{align*}
	\left\Vert \mathbf{Q}_{\varepsilon}^{\beta}u\right\Vert _{L^{2}\left(\Omega\right)}^{2} & =\frac{1}{T^{2}}\sum_{p\in \mathbb{N}}\log^{2}\left(1+\gamma^{-1}\left(T,\beta\right)e^{\overline{M}T\mu_{p}}\right)\left|\left\langle u,\phi_{p}\right\rangle \right|^{2}\\
	& \le\frac{\gamma^{-2}\left(T,\beta\right)}{T^{2}}\left\Vert e^{\overline{M}T\left(-\Delta\right)}u\right\Vert _{L^{2}\left(\Omega\right)}^{2}.
	\end{align*}
	
	The norm $\left\Vert e^{\overline{M}T\left(-\Delta\right)}u\right\Vert _{L^{2}\left(\Omega\right)}$ is characterized by the so-called Gevrey class of real-analytic functions. In this case, it is also performed as a Hilbert space and then contained in $L^2(\Omega)$. Fruitful discussions on this typical space are preferably in \cref{sec:conclusions} and \cref{appendix1}. It now remains to deduce the estimate for the operator $\mathbf{P}_{\varepsilon}^{\beta}$. In fact, it follows from its own structure that
	\begin{align*}
	\mathbf{P}_{\varepsilon}^{\beta}u  =\sum_{p\in\mathbb{N}}\left(\frac{1}{T}\log\left(1+\gamma^{-1}\left(T,\beta\right)e^{\overline{M}T\mu_{p}}\right)-\overline{M}\mu_{p}\right)\left\langle u,\phi_{p}\right\rangle \phi_{p}.
	\end{align*}
	
	Thanks to the inequality $\log\left(1+ab\right)\le\log\left(b\left(1+a\right)\right)\le\log\left(1+a\right)+\log\left(b\right)$ for $a>0, b\ge 1$, we have
	\[
	\log\left(1+\gamma^{-1}\left(T,\beta\right)e^{\overline{M}T\mu_{p}}\right)-\overline{M}T\mu_{p}\le\log\left(1+\gamma^{-1}\left(T,\beta\right)\right)\quad\text{for all }p\in\mathbb{N}.
	\]
	Consequently, by Parseval's identity we get
	\[
	\left\Vert \mathbf{P}_{\varepsilon}^{\beta}u\right\Vert _{L^{2}\left(\Omega\right)}\le\frac{1}{T}\log\left(\gamma\left(T,\beta\right)\right)\left\Vert u\right\Vert _{L^{2}\left(\Omega\right)}.
	\]
\end{example}


Now, we detail the regularized problem: For each $\varepsilon>0$,
let $\ell^{\varepsilon}:=\ell\left(\varepsilon\right)\in\left(0,\infty\right)$
be a cut-off parameter satisfying
\begin{equation}
\lim_{\varepsilon\to0^{+}}\ell^{\varepsilon}=\infty,
\label{cutoff-para}
\end{equation}
then we consider the following problem:
\begin{equation}
u_{t}^{\varepsilon}+\nabla\cdot\left(-a\left(x,t;u^{\varepsilon};\nabla u^{\varepsilon}\right)\nabla u^{\varepsilon}\right)-\mathbf{Q}_{\varepsilon}^{\beta}u^{\varepsilon}=F_{\ell^{\varepsilon}}\left(x,t;u^{\varepsilon};\nabla u^{\varepsilon}\right)\quad\text{in }Q_{T},
\label{reguproblem}
\end{equation}
associated with the Dirichlet boundary condition and the terminal
noisy data
\begin{equation}
u^{\varepsilon}=0\;\text{ on }\partial\Omega\times\left(0,T\right),\quad u^{\varepsilon}\left(x,T\right)=u_{f}^{\varepsilon}\left(x\right)\;\text{ in }\Omega.
\label{reguproblem-1}
\end{equation}

\section{Analysis of the QR method}
Some certain cases of the general system \eqref{Pb1} can be solved by the QR scheme we have proposed. Nevertheless, this mathematical over-generality merely leads to extra steps of proofs. Thereby, this curtails the core idea behind the regularization. In this section we only consider a rather simplified version of \eqref{Pb1}, while we will briefly discuss the result of the general system in \cref{sec:remarks}. We take into account the following reaction-diffusion equation with $N=1$:\begin{equation}
	u_{t} - a \Delta u=F\left(x,t;u\right)\quad\text{in }Q_{T},
	\label{Pb1-new}
	\end{equation}
endowed with the zero Dirichlet boundary condition and the terminal condition \eqref{terminal}.
	
This means we will use the assumptions  $\left(\text{A}_{1}\right)$-$\left(\text{A}_{4}\right)$ with $N=1$. Notice that \eqref{Pb1-new} also implies the following reduction through our analysis:
\begin{itemize}
	\item $a(x,t;u;\nabla u) = a > 0$ -- the method only needs to use the strict upper bound of the diffusion coefficient, saying that $a < \overline{M}$ (reduced from $\left(\text{A}_{1}\right)$). Corresponding to $\left(\text{A}_{2}\right)$, this way we take $A = \overline{M} - a \in (\overline{M} - M_1,\overline{M})$ for $a< M_1 < \overline{M}$ by the completeness of real numbers. 
	\item $F(x,t;u;\nabla u) = F(x,t;u)$ is globally Lipschitz-continuous in $u$, i.e. $F_{\ell} = F$ and $L_F(\ell )= L_F$ is independent of all involved parameters; see $\left(\text{A}_{3}\right)$ and \eqref{eq:2.1} with a typical example $F(u)=\sin u$. We will come back to the locally Lipschitz-continuous case of $F$ in \cref{sec:remarks}. This case is significantly more difficult to estimate due to the blow-up profile of the cut-off parameter $\ell^{\varepsilon}$; see \eqref{cutoff-para}. 
\end{itemize}

Hereby, when we recall these assumptions, i.e. $\left(\text{A}_{1}\right)$-$\left(\text{A}_{4}\right)$, in the analysis, it is understood that the correspondingly reduced versions are considered. We below scrutinize the existence result for the regularized problem and the convergence analysis obtained after applying the QR scheme \eqref{reguproblem}-\eqref{reguproblem-1} to the semi-linear case \eqref{Pb1-new}. When doing so, proofs of our results are based on several energy-like estimates using an auxiliary parameter, denoted either by $\rho_{\varepsilon}$ or by $\rho_{\beta}$, depending on whether the regularization parameter $\beta$ is involved. In this spirit, we technically seek fine energy controls for the ``scaled" problems obtained by the weight function $e^{\rho_{\varepsilon}(t-T)}$. The choice of this parameter is definitely dependent on every single aspect of analysis, but it will at least include the magnitude of stability of the regularized problem. Thus, its behavior obeys
\[
\lim_{\varepsilon\to0^{+}}\rho_\varepsilon=\infty.
\]
\subsection{Existence result for the regularized problem}\label{subsec:4.2}
For each $\varepsilon>0$, we put $v^{\varepsilon}\left(x,t\right)=e^{\rho_{\varepsilon}\left(t-T\right)}u^{\varepsilon}\left(x,t\right)$. Under a suitable choice of such a parameter, we obtain the existence result for the regularized problem in the framework of Faedo-Galerkin procedures. Using  $\left(\text{A}_{3}\right)$, the regularized problem \eqref{reguproblem} for the semi-linear case \eqref{reguproblem-1} can be rewritten as
\begin{equation}\label{regune}
u_{t}^{\varepsilon}+A \Delta u^{\varepsilon}=F\left(x,t;u^{\varepsilon}\right)+\mathbf{P}_{\varepsilon}^{\beta}u^{\varepsilon}.
\end{equation}
Multiplying this equation by $e^{\rho_{\varepsilon}\left(t-T\right)}$,
it becomes
\begin{equation}
v_{t}^{\varepsilon}+A\Delta v^{\varepsilon}-\rho_{\varepsilon}v^{\varepsilon}=e^{\rho_{\varepsilon}\left(t-T\right)}F\left(x,t;u^{\varepsilon}\right)+\mathbf{P}_{\varepsilon}^{\beta}v^{\varepsilon},\label{eq:4.1}
\end{equation}
by virtue of the linearity of the operator $\mathbf{P}_{\varepsilon}^{\beta}$.

Note that the boundary and terminal conditions of \eqref{eq:4.1} remain the same as \eqref{reguproblem-1} due to the structural definition of $v^{\varepsilon}$. Henceforward, multiplying
(\ref{eq:4.1}) by a test function $\psi\in H_{0}^{1}\left(\Omega\right)$
we define a weak formulation of \eqref{reguproblem-1} in the following standard type.
\begin{definition}\label{def:weak} For each $\varepsilon > 0$, a function $v^{\varepsilon}$ is said to be a weak solution of (\ref{eq:4.1}) if
	\[v^{\varepsilon}\in L^{2}\left(0,T;H_{0}^{1}\left(\Omega\right)\right)\cap L^{\infty}\left(0,T;L^{2}\left(\Omega\right)\right)
	\]
and it holds
	\begin{align}
	\frac{d}{dt}\left\langle v^{\varepsilon},\psi\right\rangle  & -A\int_{\Omega}\nabla v^{\varepsilon}\cdot\nabla\psi dx-\rho_{\varepsilon}\left\langle v^{\varepsilon},\psi\right\rangle \label{eq:4.2} \\
	& =e^{\rho_{\varepsilon}\left(t-T\right)}\left\langle F\left(\cdot,t;e^{\rho_{\varepsilon}\left(T-t\right)}v^{\varepsilon}\right),\psi\right\rangle +\left\langle \mathbf{P}_{\varepsilon}^{\beta}v^{\varepsilon},\psi\right\rangle \;\text{for all } \psi\in H_{0}^{1}\left(\Omega\right).\nonumber
	\end{align}
\end{definition}

Let $\mathbb{S}_{n}$ be the space generated by $\phi_{1},\phi_{2},...,\phi_{n}$
for $n=1,2,...$ where in general $\left\{\phi_{j}\right\}$ is a Schauder basis of $H^1(\Omega)$ (so it can be the eigenfunctions mentioned in \cref{exampleope}), then let
\begin{equation}
v_{n}^{\varepsilon}\left(x,t\right)=\sum_{j=1}^{n}V_{jn}^{\varepsilon}\left(t\right)\phi_{j}\left(x\right)\label{eq:4.5}
\end{equation}
be the weak solution of the following approximate problem, corresponding to \eqref{eq:4.1}:
\begin{align}\label{eq:4.3}
	\left\langle \left(v_{n}^{\varepsilon}\right)_{t},\psi\right\rangle  & -A\int_{\Omega}\nabla v_{n}^{\varepsilon}\cdot\nabla\psi dx-\rho_{\varepsilon}\left\langle v_{n}^{\varepsilon},\psi\right\rangle  \\
	& =e^{\rho_{\varepsilon}\left(t-T\right)}\left\langle F\left(\cdot,t;e^{\rho_{\varepsilon}\left(T-t\right)}v_{n}^{\varepsilon}\right),\psi\right\rangle +\left\langle \mathbf{P}_{\varepsilon}^{\beta}v_{n}^{\varepsilon},\psi\right\rangle ,\nonumber
\end{align}
for all $\psi\in\mathbb{S}_{n}$, with the final condition
\begin{equation}
v_{n}^{\varepsilon}\left(T\right)=v_{fn}^{\varepsilon}=\sum_{j=1}^{n}\left(V_{f}^{\varepsilon}\right)_{jn}\phi_{j}\to u_{f}^{\varepsilon}\;\text{ strongly in }L^{2}\left(\Omega\right)\;\text{ as }n\to \infty.\label{eq:4.4}
\end{equation}

To derive the nonlinear ordinary differential equations with respect to the time argument for $V_{jn}\left(t\right)$, it follows from (\ref{eq:4.3}) with using $\psi = \phi_{j}$ that for $1\le j\le n$,
\begin{align*}
	\left(V_{jn}^{\varepsilon}\right)_{t} -(A+\rho_{\varepsilon})V_{jn}^{\varepsilon}
	=e^{\rho_{\varepsilon}\left(t-T\right)}\left\langle F\left(\cdot,t;e^{\rho_{\varepsilon}\left(T-t\right)}v_{n}^{\varepsilon}\right),\phi_{j}\right\rangle 
	+\left\langle \mathbf{P}_{\varepsilon}^{\beta}v_{n}^{\varepsilon},\phi_{j}\right\rangle ,
\end{align*}
and $V_{jn}^{\varepsilon}\left(T\right)=\left(V_{f}^{\varepsilon}\right)_{jn}$.

By using the Newton-Liebniz formula, one has
\begin{align}
	 V_{jn}^{\varepsilon}\left(t\right) &  =\left(V_{f}^{\varepsilon}\right)_{jn}-(A+\rho_{\varepsilon})\int_{t}^{T}V_{jn}^{\varepsilon}\left(s\right)ds \label{eq:4.6}\\
	& -\int_{t}^{T}\left[e^{\rho_{\varepsilon}\left(s-T\right)}\left\langle F\left(\cdot,s;e^{\rho_{\varepsilon}\left(T-s\right)}v_{n}^{\varepsilon}\right),\phi_{j}\right\rangle +\left\langle \mathbf{P}_{\varepsilon}^{\beta}v_{n}^{\varepsilon}\left(s\right),\phi_{j}\right\rangle \right]ds.\nonumber
\end{align}

\begin{lemma}
	Suppose that (\ref{eq:4.1-1}) holds. For any fixed
	$n\in\mathbb{N}$ and for each $\varepsilon>0$, the system (\ref{eq:4.3})-(\ref{eq:4.4})
	has a unique solution $V_{jn}^{\varepsilon}\in C([0,T])$.
\end{lemma}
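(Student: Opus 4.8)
The plan is to recast the system \eqref{eq:4.6} as a fixed-point problem on a suitable Banach space and invoke the contraction mapping principle, first on a short time interval and then extend to all of $[0,T]$ by a continuation argument. Concretely, for $\tau \in [0,T)$ I would work on the space $X_\tau := C([\tau,T];\mathbb{R}^n)$ equipped with the sup-norm, and define the operator $\Phi = (\Phi_1,\dots,\Phi_n)$ by letting $\Phi_j$ be the right-hand side of \eqref{eq:4.6}, i.e.
\[
\left(\Phi V\right)_j(t) = \left(V_f^\varepsilon\right)_{jn} - (A+\rho_\varepsilon)\int_t^T V_j(s)\,ds - \int_t^T\left[e^{\rho_\varepsilon(s-T)}\left\langle F\left(\cdot,s;e^{\rho_\varepsilon(T-s)}v_n^\varepsilon\right),\phi_j\right\rangle + \left\langle \mathbf{P}_\varepsilon^\beta v_n^\varepsilon(s),\phi_j\right\rangle\right]ds,
\]
where $v_n^\varepsilon = \sum_{k=1}^n V_k \phi_k$. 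A solution of \eqref{eq:4.3}--\eqref{eq:4.4} on $[\tau,T]$ is exactly a fixed point of $\Phi$ in $X_\tau$; note that any such fixed point is automatically $C^1$ because the integrand is continuous in $s$, so $C([0,T])$-regularity comes for free once existence is established.

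The key steps, in order, are: (i) verify that $\Phi$ maps $X_\tau$ into itself — here one uses that $F(\cdot,s;\cdot)$ is continuous and, by $(\text{A}_3)$ in its reduced (globally Lipschitz) form, grows at most linearly, while $\left\langle\mathbf{P}_\varepsilon^\beta v_n^\varepsilon(s),\phi_j\right\rangle = V_k(s)\langle\mathbf{P}_\varepsilon^\beta\phi_k,\phi_j\rangle$ is a bounded linear expression in $V$, the boundedness following from \eqref{eq:4.1-1}; (ii) estimate $\|\Phi V - \Phi W\|_{X_\tau}$ for $V,W\in X_\tau$: the affine term contributes $(A+\rho_\varepsilon)(T-\tau)\|V-W\|_{X_\tau}$; the $F$-term contributes, via the global Lipschitz constant $L_F$ and the bound $|e^{\rho_\varepsilon(T-s)}|\le e^{\rho_\varepsilon T}$ together with $\|\phi_k\|_{L^2}\le 1$ (or the relevant basis bound), a term of the form $C_F(\varepsilon,n)(T-\tau)\|V-W\|_{X_\tau}$; and the $\mathbf{P}_\varepsilon^\beta$-term contributes $C_1\log(\gamma(T,\beta))\,C(n)\,(T-\tau)\|V-W\|_{X_\tau}$ using \eqref{eq:4.1-1}; (iii) collect these into a single constant $K = K(\varepsilon,n,\beta)$ and choose $T-\tau < 1/(2K)$ so that $\Phi$ is a contraction on $X_\tau$ with ratio $1/2$, giving a unique fixed point on $[\tau,T]$; (iv) since $K$ does not depend on the choice of $\tau$, the same step length $1/(2K)$ works at every stage, so a finite number of iterations — starting at $T$, then at $T - 1/(2K)$, etc. — covers $[0,T]$, and the local solutions patch together into a unique $V_{jn}^\varepsilon \in C([0,T])$.

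The main obstacle is a mild one: keeping track honestly of the fact that all the constants (the magnitude $\rho_\varepsilon$, the quantity $\log(\gamma(T,\beta))$, and the basis-dependent factors $\langle\mathbf{P}_\varepsilon^\beta\phi_k,\phi_j\rangle$, $\|\nabla\phi_j\|$) may be large, but are \emph{finite and fixed} once $\varepsilon$, $\beta$ and $n$ are fixed — so the contraction argument is legitimate even though the constants blow up as $\varepsilon\to 0^+$. One must also be slightly careful that $F(\cdot,s;e^{\rho_\varepsilon(T-s)}v_n^\varepsilon(s))$ genuinely lies in $L^2(\Omega)$ so that the pairing with $\phi_j$ makes sense; this follows from the reduced $(\text{A}_3)$ (global Lipschitz in $u$, hence at most linear growth, hence $F$ maps $L^2$ into $L^2$ with a measurable-in-$(x,s)$ dependence). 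Everything else is a routine Banach fixed-point / continuation argument and need not be belabored.
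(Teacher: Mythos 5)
Your proposal is correct: you recast \eqref{eq:4.6} as a fixed-point problem for the same Volterra-type operator the paper uses, and all the needed bounds (global Lipschitz continuity of $F$ from the reduced $(\text{A}_3)$, the bound \eqref{eq:4.1-1} for $\mathbf{P}_{\varepsilon}^{\beta}$, and the finiteness of $\rho_{\varepsilon}$, $\log(\gamma(T,\beta))$ and the basis-dependent factors once $\varepsilon$, $\beta$, $n$ are fixed) are exactly the ones the paper invokes. The one genuine difference is the mechanism by which the contraction is obtained. You contract on a short terminal interval $[\tau,T]$ with $T-\tau<1/(2K)$ and then continue backwards in uniformly sized steps, patching local solutions; the paper instead works on all of $[0,T]$ at once and shows by induction that the $m$-fold iterate satisfies $\left|\mathcal{G}^{m}(V^{\varepsilon})-\mathcal{G}^{m}(W^{\varepsilon})\right|\le\frac{(T-t)^{m}}{m!}C^{m}\left(\overline{M}+\rho_{\varepsilon}+L_{F}+C_{1}\log(\gamma(T,\beta))\right)^{m}\left\Vert V^{\varepsilon}-W^{\varepsilon}\right\Vert _{Y}$, so that $\mathcal{G}^{m_0}$ is a contraction for $m_0$ large, and then deduces that $\mathcal{G}$ itself has a unique fixed point via $\mathcal{G}^{m_{0}}(\mathcal{G}(V^{\varepsilon}))=\mathcal{G}(\mathcal{G}^{m_{0}}(V^{\varepsilon}))$. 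The two routes are equivalent in strength here: the paper's iterated-contraction argument avoids the bookkeeping of patching subinterval solutions and of restating the terminal data at each intermediate time (which you should spell out slightly more carefully — on $[\tau_2,\tau_1]$ the integral equation must be rewritten with terminal value $V(\tau_1)$ taken from the previously constructed piece, and uniqueness is then propagated backwards interval by interval), while your continuation argument avoids the induction on $m$ and is arguably the more standard ODE-textbook presentation. Your observation that the exponential factors are harmless (indeed $e^{\rho_{\varepsilon}(s-T)}$ and the Lipschitz-generated $e^{\rho_{\varepsilon}(T-s)}$ actually cancel, so your crude bound $e^{\rho_{\varepsilon}T}$ is more than enough) and that the constants, though blowing up as $\varepsilon\to0^{+}$, are fixed for the purposes of the fixed-point argument, matches the point the paper is making by tracking $\rho_{\varepsilon}$ explicitly.
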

\begin{proof}
	The proof of this lemma is standard. Here we sketch out some important steps because it seems pertinent to see more detailed impact of $\rho_{\varepsilon}$ on all the analysis. We define the norm in the Banach space $Y=C\left(\left[0,T\right];\mathbb{R}^{n}\right)$ as follows:
	\[
	\left\Vert c\right\Vert _{Y}:=\sup_{t\in\left[0,T\right]}\sum_{j=1}^{n}\left|c_{j}\left(t\right)\right|\quad\text{with }c=\left(c_{j}\right)_{1\le j\le n}.
	\]
	By virtue of (\ref{eq:4.6}), we can define a Volterra-type integral equation and then set the operator $\mathcal{G}:C\left(\left[0,T\right];\mathbb{R}^{n}\right)\to C\left(\left[0,T\right];\mathbb{R}^{n}\right)$
	by
	\[
	\mathcal{G}\left(V^{\varepsilon}\right)\left(t\right)=H^{\varepsilon}-\int_{t}^{T}K^{\varepsilon}\left(s,V^{\varepsilon}\right)ds,
	\]
	where in the vector form, $V^{\varepsilon}$ and $H^{\varepsilon}$ indicate $V_{j}^{\varepsilon}$ and $\left(V_{f}^{\varepsilon}\right)_{j}$, respectively, and $K^{\varepsilon}$ stands for the right-hand side of (\ref{eq:4.6}) under the integration in time.
	
	Observe that when summing (\ref{eq:4.6}) with respect to $j$ up to $n$, we have
	\begin{align}
		\sum_{j=1}^{n}V_{j}^{\varepsilon}\left(t\right)& =\sum_{j=1}^{n}\left(V_{f}^{\varepsilon}\right)_{j}-(A+\rho_{\varepsilon})\sum_{j=1}^{n}\int_{t}^{T}V_{j}^{\varepsilon}\left(s\right)ds\\
		& -\sum_{j=1}^{n}\int_{t}^{T}\left[e^{\rho_{\varepsilon}\left(s-T\right)}\left\langle F\left(\cdot,s;e^{\rho_{\varepsilon}\left(T-s\right)}v_{n}^{\varepsilon}\right),\phi_{j}\right\rangle +\left\langle \mathbf{P}_{\varepsilon}^{\beta}v_{n}^{\varepsilon}\left(s\right),\phi_{j}\right\rangle \right]ds.\nonumber \label{eq:4.7}
	\end{align}

	For $V^{\varepsilon}\in C\left(\left[0,T\right];\mathbb{R}^{n}\right)$
and $W^{\varepsilon}\in C\left(\left[0,T\right];\mathbb{R}^{n}\right)$ we have the following estimates.
With the aid of the Lipschitz assumption $\left(\text{A}_{3}\right)$  we easily get
\begin{align}\label{moine1}
	\left|\left\langle F\left(\cdot,s;e^{\rho_{\varepsilon}\left(T-s\right)}v_{n}^{\varepsilon}\right)-F\left(\cdot,s;e^{\rho_{\varepsilon}\left(T-s\right)}w_{n}^{\varepsilon}\right),\phi_{j}\right\rangle\right|  \le CL_{F}e^{\rho_{\varepsilon}\left(T-s\right)}\sum_{k=1}^{n}\left|V_{k}^{\varepsilon}-W_{k}^{\varepsilon}\right|,
\end{align}
and in the same vein, using (\ref{eq:4.1-1}) implies that
\begin{equation}\label{moine2}
\left|\left\langle \mathbf{P}_{\varepsilon}^{\beta}v_{n}^{\varepsilon}\left(s\right),\phi_{j}\right\rangle -\left\langle \mathbf{P}_{\varepsilon}^{\beta}w_{n}^{\varepsilon}\left(s\right),\phi_{j}\right\rangle\right| \le CC_{1}\log\left(\gamma\left(T,\beta\right)\right)\sum_{k=1}^{n}\left|V_{k}^{\varepsilon}-W_{k}^{\varepsilon}\right|.
\end{equation}

	Grouping \eqref{moine1} and \eqref{moine2}, it follows from \eqref{eq:4.7} that the following estimate can be obtained:
	\begin{align*}
	& \left|\mathcal{G}\left(V^{\varepsilon}\right)-\mathcal{G}\left(W^{\varepsilon}\right)\right| \\ &\le C\left(T-t\right)\left(\overline{M}+\rho_{\varepsilon}+nL_{F}+C_{1}\log\left(\gamma\left(T,\beta\right)\right)\right)\left\Vert V^{\varepsilon}-W^{\varepsilon}\right\Vert _{Y},
	\end{align*}
	and furthermore, by induction we deduce
	\begin{align*}
	&
	\left|\mathcal{G}^{m}\left(V^{\varepsilon}\right)-\mathcal{G}^{m}\left(W^{\varepsilon}\right)\right|
	\\& \le\frac{\left(T-t\right)^{m}}{m!}C^{m}\left(\overline{M}+\rho_{\varepsilon}+L_{F}+C_{1}\log\left(\gamma\left(T,\beta\right)\right)\right)^{m}\left\Vert V^{\varepsilon}-W^{\varepsilon}\right\Vert _{Y},
	\end{align*}
	where we denote by $\mathcal{G}^{m}\left(V^{\varepsilon}\right)=\mathcal{G}\left(\mathcal{G}...\mathcal{G}\left(V^{\varepsilon}\right)\right)$.
	
	Since for each $\varepsilon>0$ and $n\in\mathbb{N}$, there exists $m_{0}\in\mathbb{N}$ such that 
	\[
	\frac{\left(T-t\right)^{m_{0}}}{m_{0}!}C^{m_{0}}\left(\overline{M}+\rho_{\varepsilon}+L_{F}+C_{1}\log\left(\gamma\left(T,\beta\right)\right)\right)^{m_{0}}<1,
	\]
	then $\mathcal{G}^{m_{0}}$ is a contraction mapping from $C\left(\left[0,T\right];\mathbb{R}^{n}\right)$
	onto itself. By the Banach fixed-point argument, there exists a unique
	solution $V^{\varepsilon}$ in $Y$ such that $\mathcal{G}^{m_{0}}\left(V^{\varepsilon}\right)=V^{\varepsilon}$.
	Combining this with the fact that $\mathcal{G}^{m_{0}}\left(\mathcal{G}\left(V^{\varepsilon}\right)\right)=\mathcal{G}\left(\mathcal{G}^{m_{0}}\left(V^{\varepsilon}\right)\right)=\mathcal{G}\left(V^{\varepsilon}\right)$,
	the integral equation $\mathcal{G}\left(V^{\varepsilon}\right)=V^{\varepsilon}$
	admits a unique solution in $C\left(\left[0,T\right];\mathbb{R}^{n}\right)$.
\end{proof}

From here on, we state the existence result in the following theorem.
\begin{theorem}\label{existencetheorem}
	For each $\varepsilon>0$, the regularized problem (\ref{eq:4.1})
	has a weak solution $v^{\varepsilon}$ in the sense of \cref{def:weak}. Moreover, it satisfies $v^{\varepsilon}\in C\left(\left[0,T\right];L^{2}\left(\Omega\right)\right)$ and  $v_{t}^{\varepsilon}\in L^{2}\left(0,T;\left(H^{1}\left(\Omega\right)\right)'\right)$. 
\end{theorem}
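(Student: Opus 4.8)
The plan is to construct $v^{\varepsilon}$ by the standard Faedo--Galerkin compactness procedure built on the approximations $v_{n}^{\varepsilon}$ from \eqref{eq:4.5}, whose existence on $[0,T]$ is guaranteed by the preceding lemma, and then to pass to the limit $n\to\infty$. \textbf{Step 1 (uniform a priori bounds).} I would first test \eqref{eq:4.3} with $\psi=v_{n}^{\varepsilon}(t)\in\mathbb{S}_{n}$. Using the zero Dirichlet condition, $\int_{\Omega}\nabla v_{n}^{\varepsilon}\cdot\nabla v_{n}^{\varepsilon}\,dx=\|\nabla v_{n}^{\varepsilon}\|_{L^{2}(\Omega)}^{2}$, so that after running Gr\"onwall's inequality \emph{backward} from $t=T$ the delicate-looking terms are in fact tame: since $e^{\rho_{\varepsilon}(t-T)}\le1$ on $[0,T]$, the reduced global Lipschitz assumption $(\mathrm{A}_{3})$ gives $\big|e^{\rho_{\varepsilon}(t-T)}\langle F(\cdot,t;e^{\rho_{\varepsilon}(T-t)}v_{n}^{\varepsilon}),v_{n}^{\varepsilon}\rangle\big|\le\big(\|F(\cdot,t;0)\|_{L^{2}(\Omega)}+L_{F}\|v_{n}^{\varepsilon}\|_{L^{2}(\Omega)}\big)\|v_{n}^{\varepsilon}\|_{L^{2}(\Omega)}$; the stabilized-operator term is bounded by $C_{1}\log(\gamma(T,\beta))\|v_{n}^{\varepsilon}\|_{L^{2}(\Omega)}^{2}$ through \eqref{eq:4.1-1}; and $-\rho_{\varepsilon}\langle v_{n}^{\varepsilon},v_{n}^{\varepsilon}\rangle$ is already of that form. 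Since $\varepsilon$ (hence $\beta=\beta(\varepsilon)$, $\rho_{\varepsilon}$ and $\log\gamma(T,\beta)$) is fixed here, Young's inequality together with $\|v_{fn}^{\varepsilon}\|_{L^{2}(\Omega)}\le C$ from \eqref{eq:4.4} yields $\|v_{n}^{\varepsilon}\|_{L^{\infty}(0,T;L^{2}(\Omega))}+\|v_{n}^{\varepsilon}\|_{L^{2}(0,T;H_{0}^{1}(\Omega))}\le C(\varepsilon)$ uniformly in $n$.

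\textbf{Step 2 (bound on $(v_{n}^{\varepsilon})_{t}$).} From \eqref{eq:4.3} I would estimate, for $\psi\in H^{1}(\Omega)$,
\[
\big|\langle(v_{n}^{\varepsilon})_{t}(t),\psi\rangle\big|\le C(\varepsilon)\big(1+\|v_{n}^{\varepsilon}(t)\|_{H^{1}(\Omega)}\big)\|\psi\|_{H^{1}(\Omega)},
\]
after replacing $\psi$ by its $L^{2}$-projection onto $\mathbb{S}_{n}$ (which is stable in the norms used here because $\{\phi_{j}\}$ can be taken to be the Dirichlet eigenbasis of $-\Delta$), and using the weak-form boundedness of $A\Delta$, the $L^{2}$-boundedness of $\mathbf{P}_{\varepsilon}^{\beta}$ from \eqref{eq:4.1-1}, and the Lipschitz bound on $F$. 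Squaring, integrating in $t$ and invoking Step 1 gives $\|(v_{n}^{\varepsilon})_{t}\|_{L^{2}(0,T;(H^{1}(\Omega))')}\le C(\varepsilon)$.

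\textbf{Step 3 (compactness and identification).} By Steps 1--2, reflexivity and Banach--Alaoglu, a subsequence satisfies $v_{n}^{\varepsilon}\rightharpoonup v^{\varepsilon}$ in $L^{2}(0,T;H_{0}^{1}(\Omega))$, $v_{n}^{\varepsilon}\rightharpoonup v^{\varepsilon}$ weakly-$*$ in $L^{\infty}(0,T;L^{2}(\Omega))$, and $(v_{n}^{\varepsilon})_{t}\rightharpoonup(v^{\varepsilon})_{t}$ in $L^{2}(0,T;(H^{1}(\Omega))')$; in particular $v^{\varepsilon}$ has the regularity required in \cref{def:weak}, and the Aubin--Lions--Simon lemma (via the compact embedding $H_{0}^{1}(\Omega)\hookrightarrow L^{2}(\Omega)$) upgrades this to $v_{n}^{\varepsilon}\to v^{\varepsilon}$ strongly in $L^{2}(Q_{T})$, hence a.e. along a further subsequence. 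The linear terms pass to the limit by weak convergence ($A\Delta$) and by continuity of $\mathbf{P}_{\varepsilon}^{\beta}$ on $L^{2}$, while the nonlinearity is controlled by the global Lipschitz estimate $\|e^{\rho_{\varepsilon}(\cdot-T)}[F(\cdot,\cdot;e^{\rho_{\varepsilon}(T-\cdot)}v_{n}^{\varepsilon})-F(\cdot,\cdot;e^{\rho_{\varepsilon}(T-\cdot)}v^{\varepsilon})]\|_{L^{2}(Q_{T})}\le L_{F}\|v_{n}^{\varepsilon}-v^{\varepsilon}\|_{L^{2}(Q_{T})}\to0$. Fixing $m$, testing \eqref{eq:4.3} (for $n\ge m$) against $\psi\in\mathbb{S}_{m}$ and $\eta\in C^{1}([0,T])$, letting $n\to\infty$, and then using density of $\bigcup_{m}\mathbb{S}_{m}$ in $H_{0}^{1}(\Omega)$ recovers \eqref{eq:4.2} for a.e. $t$ and all $\psi\in H_{0}^{1}(\Omega)$. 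The Lions--Magenes embedding gives $v^{\varepsilon}\in C([0,T];L^{2}(\Omega))$, and choosing $\eta$ with $\eta(0)=0$, $\eta(T)=1$ and passing to the limit in $v_{n}^{\varepsilon}(T)=v_{fn}^{\varepsilon}\to u_{f}^{\varepsilon}$ identifies the terminal value $v^{\varepsilon}(T)=u_{f}^{\varepsilon}$.

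\textbf{Main obstacle.} The genuinely technical points are the backward-in-time Gr\"onwall bookkeeping in Step 1 --- specifically, checking that $\rho_{\varepsilon}$ and $\log\gamma(T,\beta)$ enter only through $\varepsilon$-dependent but finite constants, which is exactly why no growth restriction on $F$ at infinity is needed --- and the duality argument of Step 2 on the finite-dimensional spaces $\mathbb{S}_{n}$. Everything else is routine Galerkin machinery; it goes through smoothly precisely because in the reduced setting \eqref{Pb1-new} the nonlinearity $F$ is globally Lipschitz, so the cut-off parameter $\ell^{\varepsilon}$ plays no role here (in contrast with the locally Lipschitz case discussed in \cref{sec:remarks}).
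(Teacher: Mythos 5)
Your proposal is correct and follows essentially the same route as the paper: Faedo--Galerkin approximations, an energy estimate obtained by testing \eqref{eq:4.3} with $v_{n}^{\varepsilon}$ and integrating backward from $t=T$, a dual bound on $(v_{n}^{\varepsilon})_{t}$ in $L^{2}(0,T;(H^{1}(\Omega))')$, Banach--Alaoglu plus Aubin--Lions compactness, passage to the limit using the global Lipschitz property of $F$ and the $L^{2}$-boundedness of $\mathbf{P}_{\varepsilon}^{\beta}$, and the $\vartheta$-test-function argument to recover $v^{\varepsilon}(T)=u_{f}^{\varepsilon}$. The only cosmetic difference is that the paper fixes $2\rho_{\varepsilon}=L_{F}+C_{1}^{2}\log^{2}\left(\gamma\left(T,\beta\right)\right)+2$ so that the zeroth-order terms are absorbed directly without Gr\"onwall, whereas you keep $\rho_{\varepsilon}$ arbitrary and run a backward Gr\"onwall argument with $\varepsilon$-dependent constants, which yields the same uniform-in-$n$ bounds.
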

\begin{proof}
	We now give some \emph{a priori} estimates for the solution of the problem (\ref{eq:4.1}). When doing so, we choose $\psi=v_{n}^{\varepsilon}$ in (\ref{eq:4.3}) to get
	\begin{align}\label{eq:4.11}
		\frac{1}{2}\frac{d}{dt}\left\Vert v_{n}^{\varepsilon}\right\Vert _{L^{2}\left(\Omega\right)}^{2} &-A\left\Vert \nabla v_{n}^{\varepsilon}\right\Vert _{[L^{2}\left(\Omega\right)]^{d}}^{2}-\rho_{\varepsilon}\left\Vert v_{n}^{\varepsilon}\right\Vert _{L^{2}\left(\Omega\right)}^{2} \\
		 = & \underbrace{e^{\rho_{\varepsilon}\left(t-T\right)}\left\langle F\left(\cdot,t;e^{\rho_{\varepsilon}\left(T-t\right)}v_{n}^{\varepsilon}\right),v_{n}^{\varepsilon}\right\rangle }_{:=I_{3}}+\underbrace{\left\langle \mathbf{P}_{\varepsilon}^{\beta}v_{n}^{\varepsilon},v_{n}^{\varepsilon}\right\rangle }_{:=I_{4}}.\nonumber
	\end{align}

	Note from the resulting structural condition of $F$
	in $\left(\text{A}_{3}\right)$ that
	\[
	e^{\rho_{\varepsilon}\left(t-T\right)}\left|F\left(x,t;e^{\rho_{\varepsilon}\left(T-t\right)}v_{n}^{\varepsilon}\right)-F\left(x,t;0\right)\right|\le L_{F}\left|v_{n}^{\varepsilon}\right|,
	\]
	one thus has
	\begin{align*}
		I_{3} & \ge-\frac{e^{2\rho_{\varepsilon}\left(t-T\right)}}{2L_{F}}\left\Vert F\left(\cdot,t;e^{\rho_{\varepsilon}\left(T-t\right)}v_{n}^{\varepsilon}\right)\right\Vert _{L^{2}\left(\Omega\right)}^{2}-\frac{L_{F}}{2}\left\Vert v_{n}^{\varepsilon}\right\Vert _{L^{2}\left(\Omega\right)}^{2}\\
		& \ge-\frac{e^{2\rho_{\varepsilon}\left(t-T\right)}}{2L_{F}}\left\Vert F\left(\cdot,t;0\right)\right\Vert _{L^{2}\left(\Omega\right)}^{2}
		-\frac{1}{2}\left(1+L_{F}\right)\left\Vert v_{n}^{\varepsilon}\right\Vert ^{2}_{L^{2}\left(\Omega\right)}.
	\end{align*}

	Similarly, based on the structural definition of $\mathbf{P}_{\varepsilon}^{\beta}$
	in (\ref{eq:4.1-1}), it yields
	\begin{align*}
		\left\langle \mathbf{P}_{\varepsilon}^{\beta}v_{n}^{\varepsilon},v_{n}^{\varepsilon}\right\rangle  & \ge-\frac{1}{2}\left(\left\Vert \mathbf{P}_{\varepsilon}^{\beta}v_{n}^{\varepsilon}\right\Vert _{L^{2}\left(\Omega\right)}^{2}+\left\Vert v_{n}^{\varepsilon}\right\Vert _{L^{2}\left(\Omega\right)}^{2}\right)\\
		& \ge-\frac{1}{2}\left(C_{1}^{2}\log^{2}\left(\gamma\left(T,\beta\right)\right)+1\right)\left\Vert v_{n}^{\varepsilon}\right\Vert _{L^{2}\left(\Omega\right)}^{2}.
	\end{align*}

	Then, (\ref{eq:4.11}) can be estimated by
	\begin{align*}
		& \frac{d}{dt}\left\Vert v_{n}^{\varepsilon}\right\Vert _{L^{2}\left(\Omega\right)}^{2}+\frac{e^{2\rho_{\varepsilon}\left(t-T\right)}}{L_{F}}\left\Vert F\left(\cdot,t;0\right)\right\Vert _{L^{2}\left(\Omega\right)}^{2}\\
		& \ge 2\underline{M}\left\Vert \nabla v_{n}^{\varepsilon}\right\Vert _{\left[L^{2}\left(\Omega\right)\right]^{d}}^{2}
		+\left(2\rho_{\varepsilon}-(1+L_F)-C_{1}^{2}\log^{2}\left(\gamma\left(T,\beta\right)\right)-1\right)\left\Vert v_{n}^{\varepsilon}\right\Vert _{L^{2}\left(\Omega\right)}^{2},
	\end{align*}
	where we have used the assumption $\left(\text{A}_{3}\right)$.
	
	Hereby, for each $\varepsilon>0$ we choose $2\rho_{\varepsilon}=L_{F}+C_{1}^{2}\log^{2}\left(\gamma\left(T,\beta\right)\right)+2>0$,
	then integrate the resulting estimate from $t$ to $T$ to obtain
	\begin{align*}
		\left\Vert v_{n}^{\varepsilon}\left(T\right)\right\Vert _{L^{2}\left(\Omega\right)}^{2} & +\frac{e^{-2T\rho_{\varepsilon}}}{L_{F}}\int_{t}^{T}\left\Vert F\left(\cdot,s;0\right)\right\Vert _{L^{2}\left(\Omega\right)}^{2}ds\\
		& \ge\left\Vert v_{n}^{\varepsilon}\left(t\right)\right\Vert _{L^{2}\left(\Omega\right)}^{2}+\underline{M}\int_{t}^{T}\left\Vert \nabla v_{n}^{\varepsilon}(s)\right\Vert _{[L^{2}\left(\Omega\right)]^{d}}^{2}ds.
	\end{align*}

	Since $v_{n}^{\varepsilon}\left(T\right)\to u_{f}^{\varepsilon}$
	in $L^{2}\left(\Omega\right)$ (cf. (\ref{eq:4.4})), we can find an $\varepsilon$-independent constant $\bar{c}>0$ such
	that
	\[
	\left\Vert v_{n}^{\varepsilon}\left(t\right)\right\Vert _{L^{2}\left(\Omega\right)}^{2}+\underline{M}\int_{t}^{T}\left\Vert \nabla v_{n}^{\varepsilon}\right\Vert _{[L^{2}\left(\Omega\right)]^{d}}^{2}ds\le\bar{c}.
	\]
	
	As byproduct, we have
	\begin{equation}
	v_{n}^{\varepsilon}\;\text{is bounded in }L^{\infty}\left(0,T;L^{2}\left(\Omega\right)\right)\;\text{and in }L^{2}\left(0,T;H_{0}^{1}\left(\Omega\right)\right).\label{eq:4.11-1}
	\end{equation}
	
	Observe that
	\begin{align*}
		\left(v_{n}^{\varepsilon}\right)_{t} & +A\Delta v_{n}^{\varepsilon}-\rho_{\varepsilon}v_{n}^{\varepsilon} =e^{\rho_{\varepsilon}\left(t-T\right)}F\left(x,t;e^{\rho_{\varepsilon}\left(T-t\right)}v_{n}^{\varepsilon}\right)+\mathbf{P}_{\varepsilon}^{\beta}v_{n}^{\varepsilon}\in\left(H^{1}\left(\Omega\right)\right)',
	\end{align*}
	which provides
	\begin{equation}
	\left(v_{n}^{\varepsilon}\right)_{t}\;\text{is bounded in }L^{2}\left(0,T;\left(H^{1}\left(\Omega\right)\right)'\right).\label{eq:4.12}
	\end{equation}
	
	Thanks to the Banach-Alaoglu theorem, the uniform bounds with respect
	to $n$, as obtained in (\ref{eq:4.11-1})-(\ref{eq:4.12}), imply
	that one can extract a subsequence (which we relabel with the index
	$n$ if necessary) such that for each $\varepsilon>0$,
	\begin{equation}
	v_{n}^{\varepsilon}\to v^{\varepsilon}\;\text{weakly}-*\;\text{in }L^{\infty}\left(0,T;L^{2}\left(\Omega\right)\right),\label{eq:4.13-1}
	\end{equation}
	\begin{equation}
	v_{n}^{\varepsilon}\to v^{\varepsilon}\;\text{weakly in }L^{2}\left(0,T;H_{0}^{1}\left(\Omega\right)\right),\label{eq:4.13}
	\end{equation}
	\begin{equation}
	\left(v_{n}^{\varepsilon}\right)_{t}\to v_{t}^{\varepsilon}\;\text{weakly in }L^{2}\left(0,T;\left(H^{1}\left(\Omega\right)\right)'\right).\label{eq:4.14}
	\end{equation}
	Furthermore, by the Aubin-Lions compactness theorem in combination
	with the Gelfand triple $H_{0}^{1}\left(\Omega\right)\subset L^{2}\left(\Omega\right)\subset\left(H^{1}\left(\Omega\right)\right)'$,
	one gets from (\ref{eq:4.13-1}) and (\ref{eq:4.14}) that
	\begin{equation}
	v_{n}^{\varepsilon}\to v^{\varepsilon}\;\text{strongly in }L^{2}\left(Q_T\right)\text{and so a.e. in \ensuremath{Q_T} for a further subsequence}.\label{eq:4.16-3}
	\end{equation}
	
	Note also that due to (\ref{eq:4.1-1}), one has for each $\varepsilon>0$,
	\begin{equation}
	\mathbf{P}_{\varepsilon}^{\beta}v_{n}^{\varepsilon}\to\mathbf{P}_{\varepsilon}^{\beta}v^{\varepsilon}\;\text{strongly in }L^{2}\left(Q_{T}\right)\text{and so a.e. in \ensuremath{Q_{T}} for a further subsequence}.\label{eq:4.16-2}
	\end{equation}
	In the same manner, one has for each $\varepsilon>0$,
	\begin{equation}
	F\left(e^{\rho_{\varepsilon}\left(T-t\right)}v_{n}^{\varepsilon}\right)\to F\left(e^{\rho_{\varepsilon}\left(T-t\right)}v^{\varepsilon}\right)\;\text{strongly in }L^{2}\left(Q_T\right).
	\label{eq:4.16-1}
	\end{equation}
	
	
	From here on, by grouping (\ref{eq:4.13-1})-(\ref{eq:4.14}) and
	(\ref{eq:4.16-3})-(\ref{eq:4.16-1}) we can pass to the limit in (\ref{eq:4.3})
	to show that $v^{\varepsilon}$ satisfies the problem (\ref{eq:4.1}) in the weak sense \eqref{eq:4.2}.  On top of that, due to (\ref{eq:4.11-1}) and (\ref{eq:4.12}), we
	have
	\[
	v^{\varepsilon}\in C\left(\left[0,T\right];L^{2}\left(\Omega\right)\right),
	\]
	where we have applied the embeddings $H_{0}^{1}\left(\Omega\right)\subset L^{2}\left(\Omega\right)\subset\left(H^{1}\left(\Omega\right)\right)'$
	and $H^{1}\left(0,T\right)\subset C\left[0,T\right]$.
	
	Now, it remains to verify the terminal data. In fact, we take a function
	$\vartheta\in C^{1}\left[0,T\right]$ with $\vartheta\left(0\right)=0$
	and $\vartheta\left(T\right)=1$. As a consequence of the convergence
	(\ref{eq:4.12}), one has
	\[
	\int_{0}^{T}\left\langle \left(v_{n}^{\varepsilon}\right)_{t},\psi\right\rangle \vartheta dt\to\int_{0}^{T}\left\langle v_{t}^{\varepsilon},\psi\right\rangle \vartheta dt\quad\text{for all }\psi\in L^{2}\left(\Omega\right),
	\]
	and by integration by parts together with the Newton-Liebniz formula,
	it becomes
	\begin{equation}
	-\int_{0}^{T}\left\langle v_{n}^{\varepsilon},\psi\right\rangle \vartheta_{t}dt+\left\langle v_{n}^{\varepsilon}\left(T\right),\psi\right\rangle \vartheta\left(T\right)\to-\int_{0}^{T}\left\langle v^{\varepsilon},\psi\right\rangle \vartheta_{t}dt+\left\langle v^{\varepsilon}\left(T\right),\psi\right\rangle \vartheta\left(T\right),\label{eq:4.20}
	\end{equation}
	for all $\psi\in L^{2}\left(\Omega\right)$.
	Consequently, the weak convergence (\ref{eq:4.13}) allows us to obtain
	$\left\langle v_{n}^{\varepsilon}\left(T\right),\psi\right\rangle \to\left\langle v^{\varepsilon}\left(T\right),\psi\right\rangle $
	for all $\psi\in H_{0}^{1}\left(\Omega\right)$ from
	(\ref{eq:4.20}). Combining this convergence with the fact already
	known that $v_{n}^{\varepsilon}\left(T\right)$ converges strongly
	to $u_{f}^{\varepsilon}$ in $L^{2}\left(\Omega\right)$;
	see (\ref{eq:4.4}). We thus get $\left\langle v_{n}^{\varepsilon}\left(T\right),\psi\right\rangle \to\left\langle u_{f}^{\varepsilon},\psi\right\rangle $
	for all $\psi\in H_{0}^{1}\left(\Omega\right)$. Due
	to the uniqueness of the limit, it reveals that $\left\langle v^{\varepsilon}\left(T\right),\psi\right\rangle =\left\langle u_{f}^{\varepsilon},\psi\right\rangle $
	for all $\psi\in H_{0}^{1}\left(\Omega\right)$ and
	thus $v^{\varepsilon}\left(T\right)=u_{f}^{\varepsilon}$ a.e. in
	$\Omega$.
\end{proof}

Now we show the positivity and boundedness of solution to the regularized problem \eqref{eq:4.1}. In the following theorem, if the measured inputs of the concentrations are positive and essentially bounded in a spatial environment, their distributions that obey the proposed approximation remain the same properties therein by a suitable choice of the auxiliary parameter $\rho_{\varepsilon}$. In other words, the behavior of the regularized solution strictly depends on the way $\rho_{\varepsilon}$ being taken.

\begin{theorem}\label{posi}
	Let $v^{\varepsilon}$ be a weak solution of the problem \eqref{eq:4.1} as deduced
	in \cref{existencetheorem}. For each $\varepsilon>0$, suppose that $0\le u_{f}^{\varepsilon}\in L^{\infty}\left(\Omega\right)$
	and $F\left(x,t;0\right)\equiv0$ for a.e. $\left(x,t\right)\in Q_{T}$. Moreover, for all real-valued constant $C>0$ we assume $\mathbf{P}_{\varepsilon}^{\beta}C=\mathbf{Q}_{\varepsilon}^{\beta}C \ge 0$. Then, $0\le v^{\varepsilon}\le\left\Vert u_{f}^{\varepsilon}\right\Vert _{L^{\infty}\left(\Omega\right)}$
	for a.e. $\left(x,t\right)\in Q_{T}$.
\end{theorem}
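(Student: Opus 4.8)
The plan is to prove the two-sided bound by a pair of energy estimates applied to the truncated ``bad'' parts of $v^\varepsilon$, namely $w_- := (v^\varepsilon)^- = \max\{-v^\varepsilon,0\}$ for the lower bound and $w_+ := \left(v^\varepsilon - e^{\rho_\varepsilon(t-T)}\|u_f^\varepsilon\|_{L^\infty(\Omega)}\right)^+$ for the upper bound. (Recall $v^\varepsilon = e^{\rho_\varepsilon(t-T)}u^\varepsilon$, so $0\le u^\varepsilon\le \|u_f^\varepsilon\|_{L^\infty}$ is equivalent to $0\le v^\varepsilon\le e^{\rho_\varepsilon(t-T)}\|u_f^\varepsilon\|_{L^\infty}$ on $Q_T$.) Strictly, since $v^\varepsilon$ is only a weak solution, one should run this argument at the Galerkin level on $v_n^\varepsilon$, or equivalently test the weak formulation \eqref{eq:4.2} with the admissible test function $w_\pm(\cdot,t)\in H_0^1(\Omega)$ (legitimate because these truncations preserve the zero trace and belong to $L^2(0,T;H_0^1)$), and then pass to the limit using the strong $L^2(Q_T)$ convergence \eqref{eq:4.16-3}.

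For the lower bound, I would test \eqref{eq:4.1} with $-w_- = -(v^\varepsilon)^-$. On the set $\{v^\varepsilon<0\}$ we have $\nabla v^\varepsilon = -\nabla w_-$, so the diffusion term $-A\int_\Omega \nabla v^\varepsilon\cdot\nabla w_- = A\|\nabla w_-\|_{[L^2(\Omega)]^d}^2\ge 0$ enters with a helpful sign (here $A>0$). The nonlinearity, using $F(x,t;0)\equiv 0$ and the Lipschitz bound $(\text{A}_3)$, is controlled by $e^{\rho_\varepsilon(t-T)}|F(x,t;e^{\rho_\varepsilon(T-t)}v^\varepsilon)|\le L_F|v^\varepsilon|$, hence contributes at most $L_F\|w_-\|_{L^2}^2$. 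The stabilized-operator term $\langle\mathbf{P}_\varepsilon^\beta v^\varepsilon,-w_-\rangle$ is bounded by $\tfrac12(C_1^2\log^2\gamma(T,\beta)+1)\|w_-\|_{L^2}^2$ via \eqref{eq:4.1-1}; one must also discard the piece coming from $\{v^\varepsilon\ge 0\}$, which requires the hypothesis $\mathbf{P}_\varepsilon^\beta C\ge 0$ for constants together with a splitting/monotonicity argument analogous to the standard treatment of nonlocal operators in maximum-principle proofs (this is the subtle point — see below). Collecting terms yields $\tfrac{d}{dt}\|w_-\|_{L^2}^2 \le \big(C_1^2\log^2\gamma(T,\beta)+1+2L_F+2\rho_\varepsilon\,(\text{sign correction})\big)\|w_-\|_{L^2}^2$ after absorbing the $-\rho_\varepsilon\|w_-\|^2$ term; with the \emph{same} choice $2\rho_\varepsilon = L_F + C_1^2\log^2\gamma(T,\beta)+2$ used in \cref{existencetheorem}, the bracket is dominated by a fixed multiple of $\rho_\varepsilon$, say $\le c\rho_\varepsilon$. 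Since $w_-(T)=0$ (because $u_f^\varepsilon\ge 0$), Grönwall's inequality run backward from $t=T$ gives $\|w_-(t)\|_{L^2}^2\le 0$, i.e. $w_-\equiv 0$ and $v^\varepsilon\ge 0$.

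For the upper bound, set $M:=\|u_f^\varepsilon\|_{L^\infty(\Omega)}$ and $z:=v^\varepsilon - e^{\rho_\varepsilon(t-T)}M$. Then $z$ satisfies $z_t + A\Delta z - \rho_\varepsilon z = e^{\rho_\varepsilon(t-T)}F(x,t;e^{\rho_\varepsilon(T-t)}v^\varepsilon) + \mathbf{P}_\varepsilon^\beta v^\varepsilon - \mathbf{P}_\varepsilon^\beta\!\left(e^{\rho_\varepsilon(t-T)}M\right)$, because $\tfrac{d}{dt}\!\left(e^{\rho_\varepsilon(t-T)}M\right) = \rho_\varepsilon e^{\rho_\varepsilon(t-T)}M$ exactly cancels the $-\rho_\varepsilon z$ adjustment on the constant part and $\Delta$ kills the constant. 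Testing with $z^+ = w_+$, the diffusion term again has the good sign, the nonlinearity is handled as above (using $F(x,t;0)=0$ and that on $\{z>0\}$ one has $v^\varepsilon>0$ so $|F|\le L_F|v^\varepsilon|\le L_F(|z|+e^{\rho_\varepsilon(t-T)}M)$, with the lower-order $M$-term absorbable or controllable since it is $\varepsilon$-dependent but bounded on $[0,T]$ — alternatively use $L_F|v^\varepsilon|$ and note $v^\varepsilon>0$ gives $|F(\dots)|\le L_F v^\varepsilon$, then split), and the two $\mathbf{P}_\varepsilon^\beta$-terms combine as $\langle \mathbf{P}_\varepsilon^\beta z, w_+\rangle$ plus an error that is nonnegative-signed thanks to $\mathbf{P}_\varepsilon^\beta(e^{\rho_\varepsilon(t-T)}M)\ge 0$; this error must again be moved to the correct side using the constant-positivity hypothesis. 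We get $\tfrac{d}{dt}\|w_+\|_{L^2}^2 \le c\rho_\varepsilon\|w_+\|_{L^2}^2$ with $w_+(T)=0$ (since $v^\varepsilon(T)=u_f^\varepsilon\le M$ a.e.), and Grönwall backward forces $w_+\equiv 0$, i.e. $v^\varepsilon\le e^{\rho_\varepsilon(t-T)}M\le M$ on $Q_T$ (as $t\le T$).

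The main obstacle is handling the \emph{nonlocal} operator $\mathbf{P}_\varepsilon^\beta$ in the maximum-principle argument: unlike the Laplacian, $\langle\mathbf{P}_\varepsilon^\beta v^\varepsilon, (v^\varepsilon)^-\rangle$ does not split cleanly into a good-sign piece plus a controllable remainder, because $\mathbf{P}_\varepsilon^\beta$ need not satisfy a pointwise comparison principle. The hypothesis $\mathbf{P}_\varepsilon^\beta C = \mathbf{Q}_\varepsilon^\beta C\ge 0$ for constants $C>0$ is precisely what lets one write, e.g., $\mathbf{P}_\varepsilon^\beta v^\varepsilon = \mathbf{P}_\varepsilon^\beta(v^\varepsilon + \|w_-\|_{L^\infty}\cdot\mathbf 1) - \|w_-\|_{L^\infty}\mathbf{P}_\varepsilon^\beta(\mathbf 1)$ type decompositions, or to invoke the Galerkin form where $\mathbf{P}_\varepsilon^\beta=\overline M\Delta+\mathbf Q_\varepsilon^\beta$ and $\mathbf Q_\varepsilon^\beta$ is diagonalized by the eigenbasis $\{\phi_p\}$ with \emph{nonnegative} Fourier multipliers (as in \cref{exampleope}), so that $\mathbf Q_\varepsilon^\beta$ is a positive operator and its truncation interacts favorably with $w_\pm$. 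I would carry out the rigorous version at the finite-dimensional Galerkin level, where $v_n^\varepsilon$ is smooth in $t$ and $\mathbf{P}_\varepsilon^\beta v_n^\varepsilon$ stays in $\mathbb S_n$, derive the sign inequalities there, and then pass to the limit $n\to\infty$ using \eqref{eq:4.16-3}–\eqref{eq:4.16-2} and lower semicontinuity of the $L^2$-norm of truncations; the bookkeeping of which zeroth-order constants depend on $\varepsilon$ (through $\rho_\varepsilon$ and $\gamma(T,\beta)$) is routine once the sign of the nonlocal term is secured.
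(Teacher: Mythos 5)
Your lower-bound argument (testing with $-(v^{\varepsilon})^{-}$, using $F(\cdot,\cdot;0)\equiv 0$ together with the Lipschitz bound, choosing $\rho_{\varepsilon}$ large, and integrating from $t$ to $T$ with zero terminal data) is essentially the paper's own proof of the positivity part; just note that the inequality you need is $\frac{d}{dt}\|w_{-}\|_{L^{2}}^{2}\ge -c\,\|w_{-}\|_{L^{2}}^{2}$ (the paper's \eqref{eq:positive}), not the ``$\le$'' you wrote --- with data prescribed only at $t=T$, the ``$\le$'' direction yields nothing.

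The genuine gap is in the upper bound. You compare $v^{\varepsilon}$ with the moving barrier $e^{\rho_{\varepsilon}(t-T)}M$, $M:=\|u_{f}^{\varepsilon}\|_{L^{\infty}(\Omega)}$, i.e.\ you try to prove $u^{\varepsilon}\le M$, which is strictly stronger than the assertion $v^{\varepsilon}\le M$ (equivalently $u^{\varepsilon}\le e^{\rho_{\varepsilon}(T-t)}M$) and can fail: with $F(0)=0$ but $F$ merely Lipschitz (think $F(u)=-L_{F}u$), the backward evolution amplifies low modes of $u^{\varepsilon}$ above $M$. The proof breaks exactly at the step you call ``absorbable or controllable'': since $\partial_{t}\bigl(e^{\rho_{\varepsilon}(t-T)}M\bigr)$ cancels $\rho_{\varepsilon}e^{\rho_{\varepsilon}(t-T)}M$, the term $\rho_{\varepsilon}\langle e^{\rho_{\varepsilon}(t-T)}M,\,w_{+}\rangle$ is destroyed by construction, and the Lipschitz estimate of the nonlinearity leaves the unabsorbed source $-L_{F}\,e^{\rho_{\varepsilon}(t-T)}M\int_{\Omega}w_{+}\,dx$. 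With $w_{+}(T)=0$ you then only get an inequality of the form $\|w_{+}(t)\|_{L^{2}}^{2}\le c\int_{t}^{T}\|w_{+}\|_{L^{2}}^{2}\,ds+C\int_{t}^{T}\|w_{+}\|_{L^{2}}\,ds$, which does not force $w_{+}\equiv 0$ (e.g.\ $y(t)=c(T-t)$ satisfies $y^{2}\le C\int_{t}^{T}y\,ds$). The paper avoids this precisely by keeping a constant barrier: testing \eqref{eq:4.2} with $(v^{\varepsilon}-B)^{+}$, $B\ge M$, retains $+\rho_{\varepsilon}\langle B,(v^{\varepsilon}-B)^{+}\rangle$, which dominates the $-L_{F}\langle B,(v^{\varepsilon}-B)^{+}\rangle$ produced by $F$ once $\rho_{\varepsilon}\ge L_{F}$, while $\langle\mathbf{P}_{\varepsilon}^{\beta}B,(v^{\varepsilon}-B)^{+}\rangle\ge 0$ by the constant-positivity hypothesis; that is why the stated bound is on $v^{\varepsilon}$, not on $u^{\varepsilon}$. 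Two further points: your $z$-equation carries a spurious $-\mathbf{P}_{\varepsilon}^{\beta}\bigl(e^{\rho_{\varepsilon}(t-T)}M\bigr)$; the correct right-hand side is $e^{\rho_{\varepsilon}(t-T)}F+\mathbf{P}_{\varepsilon}^{\beta}v^{\varepsilon}=e^{\rho_{\varepsilon}(t-T)}F+\mathbf{P}_{\varepsilon}^{\beta}z+\mathbf{P}_{\varepsilon}^{\beta}\bigl(e^{\rho_{\varepsilon}(t-T)}M\bigr)$. And the nonlocal cross terms $\langle\mathbf{P}_{\varepsilon}^{\beta}v^{\varepsilon,+},v^{\varepsilon,-}\rangle$ you flag are treated by the paper in the same implicit way, so this is not where you diverge; but be aware that your proposed remedies (decompositions through $\mathbf{P}_{\varepsilon}^{\beta}\mathbf{1}$, or nonnegativity of the Fourier multipliers of $\mathbf{Q}_{\varepsilon}^{\beta}$) give positive semidefiniteness, not pointwise positivity preservation, and hence do not by themselves control those terms.
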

\begin{proof}
	Let $v^{\varepsilon}:=v^{\varepsilon,+}-v^{\varepsilon,-}$ where
	$f^{+}:=\max\left\{ f,0\right\} $ and $f^{-}:=\max\left\{ -f,0\right\} $.
	In \eqref{eq:4.2}, we now take the test function $\psi=-v^{\varepsilon,-}$.
	Then, by $\left(\text{A}_{3}\right)$, $\left(\text{A}_{4}\right)$ and \eqref{eq:4.1-1} we have
	\begin{align}
		\frac{d}{dt}\left\Vert v^{\varepsilon,-}\right\Vert _{L^{2}\left(\Omega\right)}^{2} & \ge\underline{M}\left\Vert \nabla v^{\varepsilon,-}\right\Vert _{\left[L^{2}\left(\Omega\right)\right]^{d}}^{2}\label{eq:positive}\\
		& +\left(\rho_{\varepsilon}-L_{F}-C_{1}\log\left(\gamma\left(T,\beta\right)\right)-1\right)\left\Vert v^{\varepsilon,-}\right\Vert _{L^{2}\left(\Omega\right)}^{2},
		\nonumber 
	\end{align}
	inspired very much the way we have estimated \eqref{eq:4.11}.
	
	Choosing $\rho_{\varepsilon}=L_{F}+C_{1}\log\left(\gamma\left(T,\beta\right)\right)+1>0$
	and observing that $\left.v^{\varepsilon,-}\right|_{t=T}=0$, we integrate
	\eqref{eq:positive} from $t$ to $T$ to get $\left\Vert v^{\varepsilon,-}\right\Vert _{L^{2}\left(\Omega\right)}^{2}\le0$,
	which indicates the positivity of $v^{\varepsilon}$.
	
	To prove the upper bound, we take the test function $\psi=\left(v^{\varepsilon}-B\right)^{+}$
	in \eqref{eq:4.2} where $B\ge\left\Vert u_{f}^{\varepsilon}\right\Vert _{L^{\infty}\left(\Omega\right)}$.
	Thus, we arrive at
	\begin{align}\label{eq:bounded}
		& \frac{d}{dt}\left\Vert \left(v^{\varepsilon}-B\right)^{+}\right\Vert _{L^{2}\left(\Omega\right)}^{2}\\
		& \ge\underline{M}\left\Vert \nabla\left(v^{\varepsilon}-B\right)^{+}\right\Vert _{\left[L^{2}\left(\Omega\right)\right]^{d}}^{2}+\rho_{\varepsilon}\left\Vert \left(v^{\varepsilon}-B\right)^{+}\right\Vert _{L^{2}\left(\Omega\right)}^{2}+\rho_{\varepsilon}\left\langle B,\left(v^{\varepsilon}-B\right)^{+}\right\rangle \nonumber\\
		& +\left\langle \mathbf{P}_{\varepsilon}^{\beta}\left(v^{\varepsilon}-B\right)^{+},\left(v^{\varepsilon}-B\right)^{+}\right\rangle +\left\langle \mathbf{P}_{\varepsilon}^{\beta}B,\left(v^{\varepsilon}-B\right)^{+}\right\rangle \nonumber\\
		& +\underbrace{e^{\rho_{\varepsilon}\left(t-T\right)}\left\langle F\left(e^{\rho_{\varepsilon}\left(T-t\right)}v^{\varepsilon}\right),\left(v^{\varepsilon}-B\right)^{+}\right\rangle}_{:= I_{5}}.\nonumber
	\end{align}

	Here, taking into account the structural condition of $F$ we get
	\begin{align*}
		I_{5} & \ge-L_{F}\left|\left\langle \left|v^{\varepsilon}\right|,\left(v^{\varepsilon}-B\right)^{+}\right\rangle \right|\\
		& \ge-L_{F}\left(\left\Vert \left(v^{\varepsilon}-B\right)^{+}\right\Vert _{L^{2}\left(\Omega\right)}^{2}+\left\langle B,\left(v^{\varepsilon}-B\right)^{+}\right\rangle \right).
	\end{align*}

	At this stage, we proceed as in the proof of the positivity. By choosing $
	\rho_{\varepsilon}=C_{1}\log\left(\gamma\left(T,\beta\right)\right)+L_{F}>0$, it follows from \eqref{eq:bounded} that $\left(v^{\varepsilon}-B\right)^{+}=0$, provided that $\left.\left(v^{\varepsilon}-B\right)^{+}\right|_{t=T}=0$. Hence, we complete the proof of the theorem.
\end{proof}

\subsection{Convergence analysis}
\label{sec:5}
We are now going to derive the convergence rates obtained when the regularized solution $u_{\beta}^{\varepsilon}$ of \eqref{reguproblem}-\eqref{reguproblem-1} is applied to approximate the solution $u$ of \eqref{Pb1-new}-\eqref{terminal} in the presence of noise on the final data. Note that in the previous subsection we only write $u^{\varepsilon}$ as the regularized solution since the parameter $\varepsilon$ is already fixed. Instead, we denote in this part $u_{\beta}^{\varepsilon}$ due to the choice of the regularization parameter $\beta(\varepsilon)$ which plays a vital role in this analysis.

Although \cref{exampleope} shows that $C_1 = \frac{1}{T}$, for an arbitrary $C_{1}>0$ we need $C_{1}T \le 1$ in our main results to gain strong convergences. At some points, this is in the same spirit of the terminology \emph{small solution} defined in \cite{Cara77}.

\subsubsection{Statement of the results}
Here we state our main results as \cref{thm:Estimate1} and \cref{thm:Estimate2}; their solid proofs are
deferred to \cref{subsec:5.2} and \cref{subsec:5.3}, respectively. Moreover, proof of \cref{cor:5.3} is given in \cref{subsec:5.4}.

In the following, let $\gamma\left(t,\beta\right)$ for $t\in\left[0,T\right]$
and $\beta:=\beta\left(\varepsilon\right)$ be
as in \cref{sec:4}. We choose
\begin{equation}
{\displaystyle \lim_{\varepsilon\to0^{+}}\gamma^{C_{1}T}\left(T,\beta\right)\varepsilon=K\in\left(0,\infty\right)}.\label{blowup}
\end{equation}

\begin{theorem}
	\textbf{\emph{\label{thm:Estimate1}(Error estimates for $0<t\le T$)}}
	
	Assume that the problem \eqref{Pb1-new}-\eqref{terminal} admits a unique solution
	\begin{equation}
	u\in C\left(\left[0,T\right];\mathbb{W}\right),\label{eq:dku}
	\end{equation}
	where the precise structure of $\mathbb{W}$ depends on the choice
	of the operator $\mathbf{Q}_{\varepsilon}^{\beta}$ in (\ref{eq:4.1-3}). For a suitable choice of the operator $\mathbf{P}_{\varepsilon}^{\beta}$
	in (\ref{eq:4.1-1}), we consider $u_{\beta}^{\varepsilon}\in C\left(\left[0,T\right];L^{2}\left(\Omega\right)\right)$
	as a solution of \eqref{regune}-\eqref{reguproblem-1} corresponding
	to the measured data $u_{f}^{\varepsilon}$. Then the following error estimate holds
	\begin{align*}
		& \left\Vert u_{\beta}^{\varepsilon}\left(\cdot,t\right)-u\left(\cdot,t\right)\right\Vert _{L^{2}\left(\Omega\right)} +\sqrt{2\underline{M}} \int_{t}^{T}\left\Vert \nabla u_{\beta}^{\varepsilon}\left(\cdot,s\right)-\nabla u\left(\cdot,s\right)\right\Vert _{\left[L^{2}\left(\Omega\right)\right]^{d}}ds\\
		\le & \gamma^{-C_{1}t}\left(T,\beta\right)\left(K+\sqrt{2T}C_{0}\gamma^{C_{1}T-1}\left(T,\beta\right)\left\Vert u\right\Vert _{C\left(\left[0,T\right];\mathbb{W}\right)}\right)e^{TC_{2}},
	\end{align*}
	for $t\in\left(0,T\right)$ and $C_{i}>0$ ($i\in\left\{ 0,1,2\right\} $)
	independent of $\varepsilon$.
\end{theorem}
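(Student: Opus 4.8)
The plan is to compare $u_\beta^\varepsilon$ and $u$ via the difference $w^\varepsilon := u_\beta^\varepsilon - u$, rescale it through the exponential weight $z^\varepsilon(x,t) := e^{\rho_\varepsilon(t-T)} \gamma^{C_1 t}(T,\beta)\, w^\varepsilon(x,t)$ (or some close variant), and run an energy estimate on $z^\varepsilon$ backwards from $t = T$ to $t$. First I would subtract the PDE for $u$, namely $u_t + A\Delta u = F(x,t;u) + \overline{M}\Delta u$, from the regularized equation \eqref{regune} for $u_\beta^\varepsilon$; since $F$ is globally Lipschitz and $\mathbf{P}_\varepsilon^\beta = \overline{M}\Delta + \mathbf{Q}_\varepsilon^\beta$ is linear, the difference equation reads $w_t^\varepsilon + A\Delta w^\varepsilon = \big(F(x,t;u_\beta^\varepsilon) - F(x,t;u)\big) + \mathbf{P}_\varepsilon^\beta w^\varepsilon + \mathbf{Q}_\varepsilon^\beta u$, where the last term $\mathbf{Q}_\varepsilon^\beta u$ is the genuine consistency error of the scheme — it is here that the source condition \eqref{eq:4.1-3} and the regularity $u \in C([0,T];\mathbb{W})$ enter, contributing a term bounded by $C_0 \|u\|_{C([0,T];\mathbb{W})}/\gamma(T,\beta)$.

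Next I would multiply the difference equation by the test function $z^\varepsilon$ (after rescaling), integrate over $\Omega$, and use the ellipticity $A = \overline{M} - a \in (\overline{M}-M_1,\overline{M})$ together with assumption $(\text{A}_2)$ to get a good sign on the $\|\nabla(\cdot)\|^2$ term — concretely, $-A\langle \Delta w^\varepsilon, z^\varepsilon\rangle$ after integration by parts produces the dissipative $\underline{M}\|\nabla z^\varepsilon\|^2$ contribution. The Lipschitz term is absorbed by Cauchy--Schwarz and Young's inequality into $L_F\|z^\varepsilon\|^2$-type quantities; the stabilized operator contributes $\langle \mathbf{P}_\varepsilon^\beta z^\varepsilon, z^\varepsilon\rangle \ge -\tfrac12(C_1^2\log^2(\gamma(T,\beta))+1)\|z^\varepsilon\|^2$ exactly as in the proof of \cref{existencetheorem}; and the consistency term $\gamma^{C_1 t}(T,\beta)\,e^{\rho_\varepsilon(t-T)}\langle \mathbf{Q}_\varepsilon^\beta u, z^\varepsilon\rangle$ is estimated by Young's inequality using \eqref{eq:4.1-3}. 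Crucially, the factor $d/dt[\gamma^{2C_1 t}]$ coming from the weight $\gamma^{C_1 t}$ is nonnegative (so it works in our favor when integrating backwards), and the $-2\rho_\varepsilon\|z^\varepsilon\|^2$ term from differentiating $e^{2\rho_\varepsilon(t-T)}$ is precisely what we designed $\rho_\varepsilon$ to cancel — choosing $2\rho_\varepsilon = L_F + C_1^2\log^2(\gamma(T,\beta)) + \text{(const)}$ makes the net coefficient of $\|z^\varepsilon\|^2$ nonnegative, so the energy inequality closes. Integrating from $t$ to $T$, using $w^\varepsilon(T) = u_f^\varepsilon - u_f$ with $\|u_f^\varepsilon - u_f\| \le \varepsilon$ (assumption $(\text{A}_4)$) and the calibration \eqref{blowup} that $\gamma^{C_1 T}(T,\beta)\varepsilon \to K$, then undoing the rescaling (dividing by $\gamma^{C_1 t}(T,\beta)e^{\rho_\varepsilon(t-T)}$ and bounding $e^{\rho_\varepsilon(t-T)}$ appropriately, which is where the $e^{TC_2}$ factor with $C_2$ depending on $L_F$ and the normalization is absorbed), yields the stated estimate.

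The main obstacle I anticipate is bookkeeping the interplay of the two competing exponential scales: the weight $\gamma^{C_1 t}(T,\beta)$ must be chosen so that its logarithmic derivative beats neither too much nor too little against the $\log^2\gamma$ growth coming from $\mathbf{P}_\varepsilon^\beta$, and the condition $C_1 T \le 1$ (flagged in the text just before the theorem) is exactly what is needed so that $\gamma^{C_1 T - 1}(T,\beta)$ stays bounded (or at worst mildly behaved) in the consistency term $\sqrt{2T}C_0\gamma^{C_1 T - 1}\|u\|_{C([0,T];\mathbb{W})}$ that appears in the final bound. A secondary technical point is that the a priori energy estimate is rigorously justified only at the Galerkin level (on $v_n^\varepsilon$ as in \cref{existencetheorem}) and then passed to the limit; since $u_\beta^\varepsilon \in C([0,T];L^2(\Omega))$ and $u$ is assumed smooth, the difference has enough regularity for the test-function manipulation to be legitimate after this limiting argument, but I would state this carefully rather than differentiate $\|w^\varepsilon\|^2$ naively.
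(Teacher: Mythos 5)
Your overall skeleton is the same as the paper's: form the difference equation $w_t+A\Delta w=(F(u_\beta^\varepsilon)-F(u))+\mathbf{P}_{\varepsilon}^{\beta}w+\mathbf{Q}_{\varepsilon}^{\beta}u$, rescale by an exponential weight, run a backward energy estimate, use $(\text{A}_4)$ at $t=T$ together with the calibration \eqref{blowup}, and read off the consistency term from \eqref{eq:4.1-3}. (Your two-factor weight $e^{\rho_\varepsilon(t-T)}\gamma^{C_1 t}$ differs from the paper's single weight $e^{\rho_\beta(t-T)}$ only by the constant $\gamma^{C_1T}$, since the paper takes $\rho_\beta=C_1\log(\gamma(T,\beta))+C_2$; that is cosmetic.)

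However, there is a genuine gap in how you treat the stabilized operator, and it kills the stated rate. You propose to bound $\left\langle \mathbf{P}_{\varepsilon}^{\beta}z,z\right\rangle \ge-\tfrac12\left(C_{1}^{2}\log^{2}(\gamma(T,\beta))+1\right)\|z\|_{L^2(\Omega)}^{2}$ ``exactly as in the proof of \cref{existencetheorem}'' and accordingly to choose $2\rho_\varepsilon=L_F+C_1^2\log^2(\gamma(T,\beta))+\mathrm{const}$. That Young-type bound is fine for existence, where no rate is needed, but in the error analysis the weight must be undone at the end: $\|u_\beta^\varepsilon(t)-u(t)\|_{L^2(\Omega)}$ picks up the factor $e^{\rho_\varepsilon(T-t)}$. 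With $\rho_\varepsilon\sim\tfrac12C_1^2\log^2\gamma$ this factor is of order $e^{c(T-t)\log^2\gamma}=\gamma^{\,c(T-t)\log\gamma}$, which cannot be absorbed into an $\varepsilon$-independent $e^{TC_2}$; combined with $\varepsilon\sim K\gamma^{-C_1T}$ the resulting bound diverges as $\varepsilon\to0$ for any fixed $t\in(0,T)$ (the exponent $\tfrac12C_1^2(T-t)\log^2\gamma-C_1t\log\gamma\to+\infty$), so the claimed estimate does not follow. The extra $C_1\log\gamma$ produced by differentiating your $\gamma^{C_1t}$ weight only cancels a single power of $\log\gamma$, not the $\log^2$ term, and the dissipative gradient term cannot absorb a coefficient growing like $\log^2\gamma$ either. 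The fix is the paper's estimate \eqref{eq:I1}: bound $\mathcal{I}_1=\left\langle \mathbf{P}_{\varepsilon}^{\beta}w_\beta^\varepsilon,w_\beta^\varepsilon\right\rangle \ge-C_{1}\log(\gamma(T,\beta))\|w_\beta^\varepsilon\|_{L^2(\Omega)}^{2}$ directly by Cauchy--Schwarz and the operator norm \eqref{eq:4.1-1} (one power of $\log$, no squaring), so that the weight need only carry $\rho_\beta=C_1\log(\gamma(T,\beta))+\tfrac12+2L_F^2$; then $e^{\rho_\beta(T-t)}=\gamma^{C_1(T-t)}e^{C_2(T-t)}$ with $C_2=\tfrac12+2L_F^2$ independent of $\varepsilon$, which is exactly what produces $\gamma^{-C_1t}\left(K+\sqrt{2T}C_0\gamma^{C_1T-1}\|u\|_{C([0,T];\mathbb{W})}\right)e^{TC_2}$. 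With that single correction the rest of your argument goes through as in \cref{subsec:5.2}.
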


\begin{theorem}
	\textbf{\emph{\label{thm:Estimate2}(Error estimate for $t=0$)}}
	
	Under the assumptions of \cref{thm:Estimate1}, we assume further that
	\begin{equation}
	u\in C\left(\left[0,T\right];\mathbb{W}\right)\cap C^{1}\left(0,T;L^{2}\left(\Omega\right)\right).\label{eq:4.37-1}
	\end{equation}
	Then, for $\varepsilon>0$
	small enough we can find a unique $t^{\varepsilon}\in\left(0,T\right)$ such that
	\begin{align*}
		\left\Vert u_{\beta}^{\varepsilon}\left(\cdot,t^{\varepsilon}\right)-u\left(\cdot,0\right)\right\Vert _{L^{2}\left(\Omega\right)} &\le \left[\left(K+\sqrt{2T}C_{0}\gamma^{C_{1}T-1}\left(T,\beta\right)\left\Vert u\right\Vert _{C\left(\left[0,T\right];\mathbb{W}\right)}\right)e^{TC_{2}}\right.\\
		& \left.+\left\Vert u_{t}\right\Vert _{C\left(0,T;L^{2}\left(\Omega\right)\right)}\right]\frac{1}{\sqrt{C_{1}}\log^{\frac{1}{2}}\left(\gamma\left(T,\beta\right)\right)},
	\end{align*}
	where $C_{i}>0$ ($i\in\left\{ 0,1,2\right\}$) are independent of
	$\varepsilon$.
\end{theorem}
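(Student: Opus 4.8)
The plan is to leverage the additional temporal regularity \eqref{eq:4.37-1} in order to pick a vanishing time $t^{\varepsilon}\to0^{+}$ at which the two competing contributions to the error are balanced, and then to apply \cref{thm:Estimate1} at $t=t^{\varepsilon}$. First I would split, by the triangle inequality,
\[
\left\Vert u_{\beta}^{\varepsilon}\left(\cdot,t^{\varepsilon}\right)-u\left(\cdot,0\right)\right\Vert _{L^{2}\left(\Omega\right)}\le\left\Vert u_{\beta}^{\varepsilon}\left(\cdot,t^{\varepsilon}\right)-u\left(\cdot,t^{\varepsilon}\right)\right\Vert _{L^{2}\left(\Omega\right)}+\left\Vert u\left(\cdot,t^{\varepsilon}\right)-u\left(\cdot,0\right)\right\Vert _{L^{2}\left(\Omega\right)}.
\]
The first summand is bounded, after discarding the nonnegative gradient integral, by \cref{thm:Estimate1} evaluated at $t=t^{\varepsilon}$, which produces the factor $\gamma^{-C_{1}t^{\varepsilon}}\left(T,\beta\right)$ multiplying the bracket that occurs there. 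For the second summand I would use \eqref{eq:4.37-1}: since $u\in C^{1}\left(0,T;L^{2}\left(\Omega\right)\right)$ we may write $u\left(\cdot,t^{\varepsilon}\right)-u\left(\cdot,0\right)=\int_{0}^{t^{\varepsilon}}u_{t}\left(\cdot,s\right)\,ds$, hence $\left\Vert u\left(\cdot,t^{\varepsilon}\right)-u\left(\cdot,0\right)\right\Vert _{L^{2}\left(\Omega\right)}\le t^{\varepsilon}\left\Vert u_{t}\right\Vert _{C\left(0,T;L^{2}\left(\Omega\right)\right)}$.

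The decisive step is the choice of $t^{\varepsilon}$. I would take it to be the unique solution in $\left(0,T\right)$ of
\[
\gamma^{-C_{1}t}\left(T,\beta\right)=\frac{1}{\sqrt{C_{1}}\log^{\frac{1}{2}}\left(\gamma\left(T,\beta\right)\right)}.
\]
For $\varepsilon$ small enough one has $\gamma\left(T,\beta\right)>1$ and $C_{1}\log\left(\gamma\left(T,\beta\right)\right)>1$, so the right-hand side lies strictly between $\gamma^{-C_{1}T}\left(T,\beta\right)$ and $1$ — the lower inequality amounting to $\gamma^{C_{1}T}\left(T,\beta\right)>\sqrt{C_{1}\log\left(\gamma\left(T,\beta\right)\right)}$, which holds because $\gamma\left(T,\beta\right)\to\infty$ while $C_{1}T>0$. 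As $t\mapsto\gamma^{-C_{1}t}\left(T,\beta\right)$ is continuous and strictly decreasing on $\left[0,T\right]$, the intermediate value theorem gives existence and uniqueness of $t^{\varepsilon}$. Solving the defining equation explicitly yields
\[
t^{\varepsilon}=\frac{\log\left(C_{1}\log\left(\gamma\left(T,\beta\right)\right)\right)}{2C_{1}\log\left(\gamma\left(T,\beta\right)\right)},
\]
so, setting $X=\sqrt{C_{1}\log\left(\gamma\left(T,\beta\right)\right)}$ and using $\log X\le X$, one obtains $t^{\varepsilon}=X^{-2}\log X\le X^{-1}=\bigl(\sqrt{C_{1}}\log^{\frac{1}{2}}\left(\gamma\left(T,\beta\right)\right)\bigr)^{-1}$; in particular $t^{\varepsilon}\to0^{+}$, which is what makes the estimate reach the initial time.

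Substituting these facts back into the two summands, the first becomes $\bigl(\sqrt{C_{1}}\log^{\frac{1}{2}}\left(\gamma\left(T,\beta\right)\right)\bigr)^{-1}\bigl(K+\sqrt{2T}C_{0}\gamma^{C_{1}T-1}\left(T,\beta\right)\left\Vert u\right\Vert _{C\left(\left[0,T\right];\mathbb{W}\right)}\bigr)e^{TC_{2}}$ by the defining relation for $t^{\varepsilon}$, and the second is at most $\bigl(\sqrt{C_{1}}\log^{\frac{1}{2}}\left(\gamma\left(T,\beta\right)\right)\bigr)^{-1}\left\Vert u_{t}\right\Vert _{C\left(0,T;L^{2}\left(\Omega\right)\right)}$ by the bound just obtained for $t^{\varepsilon}$; adding them reproduces exactly the asserted estimate. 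I expect the only genuinely delicate point to be this choice of $t^{\varepsilon}$: it must decay slowly enough that $\gamma^{-C_{1}t^{\varepsilon}}\left(T,\beta\right)$ still vanishes, yet fast enough that $t^{\varepsilon}\left\Vert u_{t}\right\Vert$ is controlled, and one has to verify that the balancing value lies in the admissible range $\bigl(\gamma^{-C_{1}T}\left(T,\beta\right),1\bigr)$ for small $\varepsilon$; the rest is a straightforward application of \cref{thm:Estimate1} together with the fundamental theorem of calculus.
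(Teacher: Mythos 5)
Your proposal is correct and follows essentially the same route as the paper: triangle inequality, \cref{thm:Estimate1} at $t=t^{\varepsilon}$, the fundamental theorem of calculus giving $t^{\varepsilon}\left\Vert u_{t}\right\Vert _{C\left(0,T;L^{2}\left(\Omega\right)\right)}$, and a balancing choice of $t^{\varepsilon}$ yielding the $\log^{-\frac{1}{2}}\left(\gamma\left(T,\beta\right)\right)$ rate. The only (harmless) difference is the defining equation for $t^{\varepsilon}$: you set $\gamma^{-C_{1}t^{\varepsilon}}\left(T,\beta\right)=\bigl(\sqrt{C_{1}}\log^{\frac{1}{2}}\left(\gamma\left(T,\beta\right)\right)\bigr)^{-1}$ explicitly, whereas the paper takes the implicit balance $t^{\varepsilon}=\gamma^{-C_{1}t^{\varepsilon}}\left(T,\beta\right)$ and then bounds $t^{\varepsilon}<\bigl(C_{1}\log\left(\gamma\left(T,\beta\right)\right)\bigr)^{-\frac{1}{2}}$ via $\log a>-a^{-1}$.
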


\begin{corollary}\label{cor:5.3}
	Under the assumptions of \cref{thm:Estimate1}, one has for any $0<t<T$, $u^{\varepsilon}_{\beta}$ is strongly convergent to $u$ in $L^{2}(t,T;L^r(\Omega))$ for some $r>2$ with the same rate as in \cref{thm:Estimate1}.
\end{corollary}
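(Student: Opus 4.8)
The plan is to upgrade the $L^2(\Omega)$-in-space bound of \cref{thm:Estimate1} to an $L^r(\Omega)$-in-space bound, integrated in time over $(t,T)$, by exploiting the parabolic smoothing inherent in the $L^2(0,T;H_0^1(\Omega))$-control that \cref{thm:Estimate1} already provides for the difference $w_\beta^\varepsilon := u_\beta^\varepsilon - u$. First I would record the two pieces of information from \cref{thm:Estimate1} that are uniform on $[t,T]$: the pointwise-in-time $L^2(\Omega)$ bound on $w_\beta^\varepsilon(\cdot,s)$, and the $L^2(t,T;[L^2(\Omega)]^d)$ bound on $\nabla w_\beta^\varepsilon$, both of order $\gamma^{-C_1 t}(T,\beta)$ times the $\varepsilon$-independent factor $\mathcal{R}:=\left(K+\sqrt{2T}C_0\gamma^{C_1T-1}(T,\beta)\|u\|_{C([0,T];\mathbb{W})}\right)e^{TC_2}$. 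Combining these gives a bound on $w_\beta^\varepsilon$ in $L^2(t,T;H^1(\Omega))$ of the same order.

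The key step is a Gagliardo--Nirenberg--Sobolev interpolation applied at a.e. time slice $s\in(t,T)$. In dimension $d$, the embedding $H^1(\Omega)\hookrightarrow L^{r}(\Omega)$ holds with $r=\frac{2d}{d-2}$ when $d\ge 3$ (and for any $r<\infty$ when $d\in\{1,2\}$), so one may fix such an $r>2$ depending on $d$, and write
\begin{equation*}
\|w_\beta^\varepsilon(\cdot,s)\|_{L^r(\Omega)} \le C_{\mathrm{GNS}}\,\|w_\beta^\varepsilon(\cdot,s)\|_{H^1(\Omega)}.
\end{equation*}
Squaring and integrating in $s$ over $(t,T)$ then yields
\begin{equation*}
\|w_\beta^\varepsilon\|_{L^2(t,T;L^r(\Omega))}^2 \le C_{\mathrm{GNS}}^2\int_t^T\|w_\beta^\varepsilon(\cdot,s)\|_{H^1(\Omega)}^2\,ds \le C_{\mathrm{GNS}}^2\,C(t,T)\,\gamma^{-2C_1 t}(T,\beta)\,\mathcal{R}^2,
\end{equation*}
where $C(t,T)$ collects the $(T-t)$ factor from the pointwise $L^2$ bound and the constant in front of the gradient term. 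Taking square roots gives the desired estimate in $L^2(t,T;L^r(\Omega))$ with the same rate $\gamma^{-C_1 t}(T,\beta)$ as in \cref{thm:Estimate1}; since $\gamma(t,\beta)\to\infty$ and $\varepsilon\to 0$ forces $\gamma^{-C_1t}(T,\beta)\to 0$ for $t>0$, strong convergence in $L^2(t,T;L^r(\Omega))$ follows. A subtlety to address for $d=1,2$ is simply that $r$ can be taken arbitrarily large (but finite), so one remarks that \emph{some} $r>2$ always exists; for $d\ge 3$ one could also interpolate between $L^2$ and $L^{2d/(d-2)}$ to get the full scale $2<r\le 2d/(d-2)$.

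The main obstacle — really the only nontrivial point — is justifying that $w_\beta^\varepsilon$ genuinely lies in $L^2(t,T;H^1(\Omega))$ with the claimed quantitative bound, i.e. that the gradient estimate of \cref{thm:Estimate1} combines cleanly with the uniform $L^2$ bound. This is immediate from the statement of \cref{thm:Estimate1}, since that theorem bounds $\|w_\beta^\varepsilon(\cdot,t)\|_{L^2(\Omega)} + \sqrt{2\underline M}\int_t^T\|\nabla w_\beta^\varepsilon(\cdot,s)\|_{[L^2(\Omega)]^d}\,ds$ uniformly; by Cauchy--Schwarz in time, $\int_t^T\|\nabla w_\beta^\varepsilon(\cdot,s)\|^2\,ds \le \frac{1}{T-t}\left(\int_t^T\|\nabla w_\beta^\varepsilon(\cdot,s)\|\,ds\right)^2$ (or one revisits the underlying energy inequality where the squared $L^2(t,T;H_0^1)$ norm appears directly), which is exactly the quantity feeding the GNS step. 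Everything else is a one-line application of the Sobolev embedding and elementary manipulation of the $\varepsilon$-independent constants, so the corollary inherits both the form and the rate of \cref{thm:Estimate1}.
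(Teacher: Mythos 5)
Your argument is correct in substance, but it takes a somewhat different route from the paper, so let me compare. The paper applies the Gagliardo--Nirenberg interpolation inequality slice-wise in multiplicative form, $\left\Vert w\right\Vert _{L^{r}(\Omega)}\le C_{\Omega}\left\Vert w\right\Vert _{L^{2}(\Omega)}^{\alpha}\left\Vert \nabla w\right\Vert _{[L^{2}(\Omega)]^{d}}^{1-\alpha}$ with $\frac{1}{r}=\frac{\alpha}{2}+\frac{(1-\alpha)(d-2)}{2d}$ and $0<\alpha<1$, then integrates in time and uses H\"older's inequality to split off the factor $(T-t)^{\alpha}$; see \eqref{eq:4.41}--\eqref{eq:4.46}. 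This covers every subcritical exponent $r>2$ with $\frac{1}{r}>\frac{d-2}{2d}$ and uses the $C([t,T];L^{2}(\Omega))$ bound to the power $2\alpha$ together with the squared-gradient bound to the power $1-\alpha$. You instead invoke the plain Sobolev embedding $H^{1}(\Omega)\hookrightarrow L^{r}(\Omega)$ (with $r=\frac{2d}{d-2}$ for $d\ge3$ and any finite $r$ for $d\le2$) and bound $u_{\beta}^{\varepsilon}-u$ in $L^{2}(t,T;H^{1}(\Omega))$. This is more elementary, and for $d\ge 3$ it even reaches the critical exponent, which the paper's requirement $\alpha>0$ excludes; since the corollary only claims the result for \emph{some} $r>2$, both routes deliver the conclusion with the same rate $\gamma^{-C_{1}t}(T,\beta)$.

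One step as you wrote it is, however, incorrect, although your parenthetical rescue is the right one. The inequality $\int_{t}^{T}\left\Vert \nabla w_{\beta}^{\varepsilon}\right\Vert ^{2}ds\le\frac{1}{T-t}\bigl(\int_{t}^{T}\left\Vert \nabla w_{\beta}^{\varepsilon}\right\Vert ds\bigr)^{2}$ is the \emph{reverse} of Cauchy--Schwarz and is false in general: an $L^{1}(t,T)$ bound on $\left\Vert \nabla w_{\beta}^{\varepsilon}(\cdot,s)\right\Vert _{[L^{2}(\Omega)]^{d}}$ does not control its $L^{2}(t,T)$ norm. Hence the squared-gradient estimate is not ``immediate from the statement'' of \cref{thm:Estimate1}; you must return to the energy inequality \eqref{eq:err1} inside its proof, which bounds $2\underline{M}\int_{t}^{T}\left\Vert \nabla u_{\beta}^{\varepsilon}(\cdot,s)-\nabla u(\cdot,s)\right\Vert _{[L^{2}(\Omega)]^{d}}^{2}ds$ directly by $\gamma^{-2C_{1}t}(T,\beta)$ times the $\varepsilon$-independent factor --- which is precisely the quantity the paper feeds into \eqref{eq:4.46}. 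With that substitution (which you anticipated), your proof is complete and yields the stated rate.
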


\subsubsection{Proof of \cref{thm:Estimate1}}\label{subsec:5.2}

For an auxiliary parameter $\rho_{\beta}>0$, we put $w_{\beta}^{\varepsilon}\left(x,t\right):=e^{\rho_{\beta}\left(t-T\right)}\left[u_{\beta}^{\varepsilon}\left(x,t\right)-u\left(x,t\right)\right]$.
Then, we compute that
\begin{align}
	\frac{\partial w_{\beta}^{\varepsilon}}{\partial t} & +A\Delta w_{\beta}^{\varepsilon}-\rho_{\beta}w_{\beta}^{\varepsilon}\label{eq:4.24} \\
	& =\mathbf{P}_{\varepsilon}^{\beta}w_{\beta}^{\varepsilon}+e^{\rho_{\beta}\left(t-T\right)}\mathbf{Q}_{\varepsilon}^{\beta}u +e^{\rho_{\beta}\left(t-T\right)}\left[F\left(x,t;u_{\beta}^{\varepsilon}\right)-F\left(x,t;u\right)\right].\nonumber
\end{align}
This equation is associated with the zero Dirichlet boundary condition
$w_{\beta}^{\varepsilon}=0$ on $\partial\Omega\times\left(0,T\right)$
and the following terminal condition:
\[
w_{\beta}^{\varepsilon}\left(x,T\right)=u_{\beta f}^{\varepsilon}\left(x\right)-u_{f}\left(x\right)\quad\text{for }x\in\Omega.
\]

Multiplying (\ref{eq:4.24}) by $w_{\beta}^{\varepsilon}$ and then
integrating the resulting equation over $\Omega$, we arrive at
\begin{align}\label{eq:4.25}
	& \frac{1}{2}\frac{d}{dt}\left\Vert w_{\beta}^{\varepsilon}\right\Vert _{L^{2}\left(\Omega\right)}^{2}-A\left\Vert \nabla w_{\beta}^{\varepsilon}\right\Vert _{\left[L^{2}\left(\Omega\right)\right]^{d}}^{2}-\rho_{\beta}\left\Vert w_{\beta}^{\varepsilon}\right\Vert _{L^{2}\left(\Omega\right)}^{2}\\
	& =\underbrace{\left\langle \mathbf{P}_{\varepsilon}^{\beta}w_{\beta}^{\varepsilon},w_{\beta}^{\varepsilon}\right\rangle }_{:=\mathcal{I}_{1}}+\underbrace{e^{\rho_{\beta}\left(t-T\right)}\left\langle \mathbf{Q}_{\varepsilon}^{\beta}u,w_{\beta}^{\varepsilon}\right\rangle }_{:=\mathcal{I}_{2}}+\underbrace{e^{\rho_{\beta}\left(t-T\right)}\left\langle F\left(u_{\beta}^{\varepsilon}\right)-F\left(u\right),w_{\beta}^{\varepsilon}\right\rangle }_{:=\mathcal{I}_{3}}.\nonumber
\end{align}

To investigate the convergence analysis, we need to bound from below the right-hand side of (\ref{eq:4.25}). Relying
on the structural property of the operator $\mathbf{P}_{\varepsilon}^{\beta}$
(cf. (\ref{eq:4.1-1})), $\mathcal{I}_{1}$ can be estimated by
\begin{equation}
\mathcal{I}_{1}\ge -C_{1}\log\left(\gamma\left(T,\beta\right)\right)\left\Vert w_{\beta}^{\varepsilon}\right\Vert _{L^{2}\left(\Omega\right)}^{2},\label{eq:I1}
\end{equation}
with the aid of H\"older's inequality.

Using the Young inequality and the structural property of the operator
$\mathbf{Q}_{\varepsilon}^{\beta}$ (cf. (\ref{eq:4.1-3})), $\mathcal{I}_{2}$
can be estimated by
\begin{equation}
\mathcal{I}_{2}\ge -C_{0}^{2}\gamma^{-2}\left(T,\beta\right)\left\Vert u\right\Vert _{\mathbb{W}}^{2}-\frac{1}{4}\left\Vert w_{\beta}^{\varepsilon}\right\Vert _{L^{2}\left(\Omega\right)}^{2}.\label{eq:I2}
\end{equation}

From now on, taking also into account the Lipschitz constant
$L_{F}$ and choosing an appropriate
Young inequality, we get the estimate of $\mathcal{I}_{3}$ as follows:
\begin{align}\label{eq:II3}
	\mathcal{I}_{3} & \ge -\frac{e^{2\rho_{\beta}\left(t-T\right)}}{8L_{F}^{2}}\left\Vert F\left(u_{\beta}^{\varepsilon}\right)-F\left(u\right)\right\Vert_{L^{2}\left(\Omega\right)}^{2}
	-2L_{F}^{2}\left\Vert w_{\beta}^{\varepsilon}\right\Vert _{\left[L^{2}\left(\Omega\right)\right]^{N}}^{2} \\
	& \ge -\left(\frac{1}{4} + 2 L_F^2\right)\left\Vert w_{\beta}^{\varepsilon}\right\Vert _{L^{2}\left(\Omega\right)}^{2}.\nonumber 
\end{align}

Plugging (\ref{eq:I1}), (\ref{eq:I2}) and (\ref{eq:II3}) 
into (\ref{eq:4.25}), and then integrating the resulting estimate
from $t$ to $T$ we obtain, after some rearrangement, that
\begin{align}
	\left\Vert w_{\beta}^{\varepsilon}\left(T\right)\right\Vert _{L^{2}\left(\Omega\right)}^{2}
	&+2\left(T-t\right)C_{0}^{2}\gamma^{-2}\left(T,\beta\right)\left\Vert u\right\Vert _{\mathbb{W}}^{2}\label{eq:4.32}\\
	& \ge\left\Vert w_{\beta}^{\varepsilon}\left(t\right)\right\Vert _{L^{2}\left(\Omega\right)}^{2}+2\underline{M}\int_{t}^{T}\left\Vert \nabla w_{\beta}^{\varepsilon}\left(s\right)\right\Vert _{\left[L^{2}\left(\Omega\right)\right]^{d}}^{2}ds,\nonumber 
\end{align}
by putting $\rho_{\beta}=C_{1}\log\left(\gamma\left(T,\beta\right)\right)+\frac{1}{2}+ 2 L_F^2>0$.

Note here that the existence of $u_{\beta}^{\varepsilon}\in L^{2}\left(0,T;H_{0}^{1}\left(\Omega\right)\right)$
has already been obtained in \cref{subsec:4.2}. Due to $\left(\text{A}_{4}\right)$ the first norm on the left-hand side of \eqref{eq:4.32} is bounded from above by $\varepsilon^{2}$. By the back-substitution $w_{\beta}^{\varepsilon}\left(x,t\right):=e^{\rho_{\beta}\left(t-T\right)}\left[u_{\beta}^{\varepsilon}\left(x,t\right)-u\left(x,t\right)\right]$ and the choice of $\rho_{\beta}$, we thus conclude that
\begin{align}
	& \left\Vert u_{\beta}^{\varepsilon}\left(\cdot,t\right)-u\left(\cdot,t\right)\right\Vert _{L^{2}\left(\Omega\right)}^{2}+2\underline{M}\int_{t}^{T}\left\Vert \nabla u_{\beta}^{\varepsilon}\left(\cdot,s\right)-\nabla u\left(\cdot,s\right)\right\Vert _{\left[L^{2}\left(\Omega\right)\right]^{d}}^{2}ds\label{eq:err1}\\
	& \le\gamma^{2C_{1}\left(T-t\right)}\left(T,\beta\right)\left(\varepsilon^{2}+2\left(T-t\right)C_{0}^{2}\gamma^{-2}\left(T,\beta\right)\left\Vert u\right\Vert _{C\left(\left[0,T\right];\mathbb{W}\right)}^{2}\right)e^{2\left(T-t\right)C_{2}},\nonumber 
\end{align}
where we have denoted by
\begin{equation}
C_{2}:=\frac{1}{2}+2L_F^2.\label{eq:CC2}
\end{equation}

Together with the $\varepsilon$-dependent blow-up rate of $\gamma$ in \eqref{blowup}, this ends the proof of the theorem.

\subsubsection{Proof of \cref{thm:Estimate2}}\label{subsec:5.3}
It is clear that in \cref{thm:Estimate1} the convergence does not hold at $t=0$. Taking a number $t^{\varepsilon}\in\left(0,T\right)$, we
prove that for each $\varepsilon>0$, there exists $t^{\varepsilon}>0$
such that $u_{\beta}^{\varepsilon}\left(x,t=t^{\varepsilon}\right)$
is a good approximation candidate of $u\left(x,t=0\right)$. Indeed,
if the source condition \eqref{eq:4.37-1} holds true, we get
\begin{align*}
	& \left\Vert u_{\beta}^{\varepsilon}\left(\cdot,t^{\varepsilon}\right)-u\left(\cdot,0\right)\right\Vert _{L^{2}\left(\Omega\right)}\\
	& \le\left\Vert u_{\beta}^{\varepsilon}\left(\cdot,t^{\varepsilon}\right)-u\left(\cdot,t^{\varepsilon}\right)\right\Vert _{L^{2}\left(\Omega\right)}+\left\Vert u\left(\cdot,t^{\varepsilon}\right)-u\left(\cdot,0\right)\right\Vert _{L^{2}\left(\Omega\right)}\\
	& \le\gamma^{-C_{1}t^{\varepsilon}}\left(T,\beta\right)\left(K+\sqrt{2T}C_{0}\gamma^{C_{1}T-1}\left(T,\beta\right)\left\Vert u\right\Vert _{C\left(\left[0,T\right];\mathbb{W}\right)}\right)e^{TC_{2}}\\
	& +t^{\varepsilon}\left\Vert u_{t}\right\Vert _{C\left(0,T;L^{2}\left(\Omega\right)\right)}.
\end{align*}

Observe that the error bound $\left\Vert u_{\beta}^{\varepsilon}\left(\cdot,t^{\varepsilon}\right)-u\left(\cdot,0\right)\right\Vert _{L^{2}\left(\Omega\right)}$ is essentially decided by the infimum of $\frac{1}{2}\left(\gamma^{-C_1 t^{\varepsilon}}(T,\beta) + t^{\varepsilon}\right)$ with respect to $t^{\varepsilon}>0$. We find that the term $\gamma^{-C_1 t^{\varepsilon}}(T,\beta)$ is decreasing and $t^{\varepsilon}$ obviously possesses a linear growth. Therefore, for every $\beta:=\beta\left(\varepsilon\right)>0$  there exists
a unique $t^{\varepsilon}\in\left(0,T\right)$ such that
\begin{equation}
\begin{cases}
{\displaystyle \lim_{\varepsilon\to0^{+}}t^{\varepsilon}=0},\\
t^{\varepsilon}=\gamma^{-C_{1}t^{\varepsilon}}\left(T,\beta\right),
\end{cases}\label{eq:4.39}
\end{equation}
and the second equation can be rewritten as
\begin{equation}
\frac{\log\left(t^{\varepsilon}\right)}{t^{\varepsilon}}=-C_{1}\log\left(\gamma\left(T,\beta\right)\right).\label{eq:4.37}
\end{equation}

Using the elementary inequality $\log\left(a\right)>-a^{-1}$ for all $a>0$,
it follows from (\ref{eq:4.37}) that
\[
t^{\varepsilon}<\sqrt{\frac{1}{C_{1}\log\left(\gamma\left(T,\beta\right)\right)}}.
\]

Henceforward, for $t^{\varepsilon}$ sufficiently small we complete the proof of the theorem.

\subsubsection{Proof of \cref{cor:5.3}}\label{subsec:5.4}
In this part, we rely on the Gagliardo-Nirenberg interpolation inequality for functions having zero trace to derive the error estimate. Essentially, it reads as
\begin{align}
	& \int_{t}^{T}\left\Vert u_{\beta}^{\varepsilon}\left(\cdot,s\right)-u\left(\cdot,s\right)\right\Vert _{L^{r}\left(\Omega\right)}^{2}ds\label{eq:4.41}\\
	& \le C_{\Omega}^{2}\left\Vert u_{\beta}^{\varepsilon}-u\right\Vert _{C\left(\left[t,T\right];L^{2}\left(\Omega\right)\right)}^{2\alpha}\int_{t}^{T}\left\Vert \nabla\left(u_{\beta}^{\varepsilon}-u\right)\left(\cdot,s\right)\right\Vert _{\left[L^{2}\left(\Omega\right)\right]^{d}}^{2\left(1-\alpha\right)}ds,\nonumber 
\end{align}
where $C_{\Omega}>0$ is a generic constant that only depends on the
geometry of $\Omega$, and the involved parameters should hold with:
$r>2$ and $0<\alpha<1$ satisfying
\[
\frac{1}{r}>\frac{d-2}{2d}\;\text{and }\frac{1}{r}=\frac{\alpha}{2}+\frac{\left(1-\alpha\right)\left(d-2\right)}{2d}.
\]
Note that (\ref{eq:4.41}) is available because of the
existence of $u_{\beta}^{\varepsilon}\in L^{2}\left(0,T;H_{0}^{1}\left(\Omega\right)\right)\cap L^{\infty}\left(0,T;L^{2}\left(\Omega\right)\right)$
leading to $C\left(\left[0,T\right];L^{2}\left(\Omega\right)\right)$;
see \cref{subsec:4.2}, and the compact
embedding $H^{1}\left(\Omega\right)\subset L^{r}\left(\Omega\right)$.
The special case of (\ref{eq:4.41}) in two and three-dimensional
versions ($d=2,3$) is the well known Ladyzhenskaya inequality.

Using H\"older's inequality we can write (\ref{eq:4.41}) as
\begin{align}
	& \int_{t}^{T}\left\Vert u_{\beta}^{\varepsilon}\left(\cdot,s\right)-u\left(\cdot,s\right)\right\Vert _{L^{r}\left(\Omega\right)}^{2}ds\label{eq:4.46}\\
	& \le C_{\Omega}^{2}\left(T-t\right)^{\alpha}\left\Vert u_{\beta}^{\varepsilon}-u\right\Vert _{C\left(\left[t,T\right];L^{2}\left(\Omega\right)\right)}^{2\alpha}\left(\int_{t}^{T}\left\Vert \nabla\left(u_{\beta}^{\varepsilon}-u\right)\left(\cdot,s\right)\right\Vert _{\left[L^{2}\left(\Omega\right)\right]^{d}}^{2}ds\right)^{1-\alpha}.\nonumber 
\end{align}

We remark that in \eqref{eq:4.46} we are only able to get the convergence until the near zero point of time, i.e. it merely holds for $0<t<T$. Accordingly, it is straightforward to obtain the rate in $L^{r}$ from \eqref{eq:4.38}. Thus, we complete the proof of the corollary. 

\section{Discussions}
\label{sec:conclusions}

\subsection{Some remarks on the system \eqref{Pb1}}\label{sec:remarks}

Having completed main results for the semi-linear case \eqref{Pb1-new}, it now suffices to provide some amendable remarks surrounding the general system \eqref{Pb1} and its regularization \eqref{reguproblem}.
\subsubsection*{Uniqueness result}
It is discernible that the regularized problem may have many solutions but those regularized solutions (if they exist) must converge to a unique true solution. Here we introduce collectively important steps, included in \cref{lem:3.1},   to prove the uniqueness result for the time-reversed system \eqref{Pb1} with the zero Dirichlet boundary condition. Then, from now onwards we will not come back to this issue in future publications for the regularization of this system. The technique we follow is mainly from \cite[Chapter 6]{Fried82}, which was used to
study the large-time behavior of solutions to a linear class of initial-boundary
value parabolic equations. Detailed proofs of the following results can be inspired from \cite{Khoa2018} for the observations in the semi-linear case \eqref{Pb1-new} with H\"older nonlinearities and the nonlinear Robin-type boundary condition.

Setting the function space 
\[
W_{T}\left(\Omega\right):=C\left(\left[0,T\right];H_{0}^{1}\left(\Omega\right)\cap W^{2,\infty}\left(\Omega\right)\right)\cap L^{\infty}\left(0,T;H^{2}\left(\Omega\right)\right)\cap C^{1}\left(0,T;L^{2}\left(\Omega\right)\right),
\]
we denote by $P_{T}\left(\Omega\right)$ the set of functions in $W_{T}\left(\Omega\right)$
such that they vanish on the boundary $\partial\Omega$ and at the
moments $t\in\left\{ 0,T\right\} $, i.e.
\[
P_{T}\left(\Omega\right):=\left\{ u\in W_{T}\left(\Omega\right):\left.u\right|_{\partial\Omega}=0,\left.u\right|_{t=T}=0,\left.u\right|_{t=0}=0\right\} .
\]
Then, for $\eta>0$ we set
\begin{align}
\lambda\left(t\right)=t-T-\eta.\label{lambdadef}
\end{align}
In what follows, this function plays a prime factor to prove the backward uniqueness result. According to solid proofs in \cite{Khoa2018}, it is also worth noting that \cref{lem:3.1} is essentially a Carleman estimate with the weight $\lambda^{-\frac{m}{k}}$; see \cite{Klibanov2015} for the observation in this spirit.

\begin{lemma}\label{lem:3.1}
	Assume the diffusion $a_{ij}(x,t,\cdot,\cdot)\in C^1(\overline{Q_T})$ for $1\le i,j\le N$ is such that it satisfies the strict ellipticity condition and the mapping $(\mathbf{p},\mathbf{q})\mapsto a\left(x,t;\mathbf{p};\mathbf{q}\right)$
	is sesquilinear for $\left(\mathbf{p},\mathbf{q}\right)\in [L^{2}\left(\Omega\right)]^{N}\times\left[L^{2}\left(\Omega\right)\right]^{Nd}$. For any $v\in [P_{T}\left(\Omega\right)]^{N}$,
	for any positive $m$ and any positive real $k$, one has
	\begin{align*}
	\left\Vert \lambda^{-\frac{m}{k}}\left(\nabla\cdot(a\left(v;\nabla v\right)\nabla v)-v_{t}\right)\right\Vert _{[L^{2}\left(Q_{T}\right)]^{N}}^{2}\nonumber \\
	\ge\frac{m}{k}\left\Vert \lambda^{-\frac{m}{k}-1}v\right\Vert _{[L^{2}\left(Q_{T}\right)]^{N}}^{2} & -D\left\Vert \lambda^{-\frac{m}{k}}\nabla v\right\Vert _{[L^{2}\left(Q_{T}\right)]^{Nd}}^{2},
	\end{align*}
	where $D$ depends only on the bounds of $\partial_{t}a$. Moreover, if
	$0<T\le\mu$ for $0<\mu\le\mu_{0}$ and $0<\eta\le\eta_{0}$ sufficiently
	small, there exists a positive $K$ independent of $m$ such that
	\begin{align}
	K\left\Vert \lambda^{-\frac{m}{k}}\left(\nabla\cdot\left(a\left(v;\nabla v\right)\nabla v\right)-v_{t}\right)\right\Vert _{\left[L^{2}\left(Q_{T}\right)\right]^{N}}^{2}\label{eq:3.10} \\
	\ge\left\Vert \lambda^{-\frac{m}{k}-1}v\right\Vert _{\left[L^{2}\left(Q_{T}\right)\right]^{N}}^{2} & +\frac{1}{2}\left\Vert \lambda^{-\frac{m}{k}}\nabla v\right\Vert _{\left[L^{2}\left(Q_{T}\right)\right]^{Nd}}^{2},
	\nonumber
	\end{align}
	for $m$ sufficiently large.
\end{lemma}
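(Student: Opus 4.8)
The plan is a weighted–energy (Carleman) argument driven by the substitution $z:=\lambda^{-m/k}v$; below all norms are taken in $L^{2}(Q_{T})$. Since $\lambda$ depends only on $t$ and satisfies $\eta\le|\lambda|\le T+\eta$ on $[0,T]$ with $\lambda'\equiv1$, while $v\in[P_{T}(\Omega)]^{N}$ vanishes on $\partial\Omega$ and at $t\in\{0,T\}$, the conjugated field $z$ inherits the regularity of $v$ and satisfies $z|_{\partial\Omega}=0$ and $z|_{t=0}=z|_{t=T}=0$, hence also $\nabla z|_{t=0}=\nabla z|_{t=T}=0$. A direct computation gives $\lambda^{-m/k}v_{t}=z_{t}+\frac{m}{k}\lambda^{-1}z$ and $\lambda^{-m/k}\nabla\cdot(a\nabla v)=\nabla\cdot(a\nabla z)$, so that, with $g:=\lambda^{-m/k}\big(\nabla\cdot(a(v;\nabla v)\nabla v)-v_{t}\big)$, one has the conjugated identity $-g=z_{t}-\nabla\cdot(a\nabla z)+\frac{m}{k}\lambda^{-1}z$.

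First I would square this identity and integrate over $Q_{T}$. Splitting the right-hand side into the skew-symmetric part $z_{t}$ and the symmetric part $\mathcal{S}z:=-\nabla\cdot(a\nabla z)+\frac{m}{k}\lambda^{-1}z$, one gets $\|g\|^{2}=\|z_{t}\|^{2}+\|\mathcal{S}z\|^{2}+2\langle z_{t},\mathcal{S}z\rangle\ge2\langle z_{t},\mathcal{S}z\rangle$. The cross term is evaluated by integration by parts: the second-order piece, via the identity $\frac{d}{dt}\langle a\nabla z,\nabla z\rangle=2\langle a\nabla z,\nabla z_{t}\rangle+\langle\partial_{t}a\,\nabla z,\nabla z\rangle$ (one may symmetrize $a$, the antisymmetric part contributing only terms that vanish) together with $\nabla z|_{t\in\{0,T\}}=0$, produces $-\int_{0}^{T}\langle\partial_{t}a\,\nabla z,\nabla z\rangle\,dt$; the zeroth-order piece, via $(\lambda^{-1})'=-\lambda^{-2}$ and $z|_{t\in\{0,T\}}=0$, produces $+\frac{m}{k}\|\lambda^{-1}z\|^{2}$. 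Bounding $\int_{0}^{T}\langle\partial_{t}a\,\nabla z,\nabla z\rangle\,dt\le D\|\nabla z\|^{2}$ with $D:=\|\partial_{t}a\|_{L^{\infty}(Q_{T})}$ and undoing the substitution ($\lambda^{-1}z=\lambda^{-m/k-1}v$, $\nabla z=\lambda^{-m/k}\nabla v$) yields the first inequality, with no smallness of $T$ needed.

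For \eqref{eq:3.10} I would complement this with the basic energy identity obtained by pairing $-g=z_{t}-\nabla\cdot(a\nabla z)+\frac{m}{k}\lambda^{-1}z$ against $z$: since $\langle z_{t},z\rangle=\tfrac12(\|z(T)\|^{2}-\|z(0)\|^{2})=0$, $-\langle\nabla\cdot(a\nabla z),z\rangle\ge\underline{M}\|\nabla z\|^{2}$ by strict ellipticity, $\frac{m}{k}\langle\lambda^{-1}z,z\rangle\ge-\frac{m}{k}(T+\eta)\|\lambda^{-1}z\|^{2}$, and $|\langle g,z\rangle|\le\tfrac12\|g\|^{2}+\tfrac{(T+\eta)^{2}}{2}\|\lambda^{-1}z\|^{2}$, one obtains $\underline{M}\|\nabla z\|^{2}\le\big(\tfrac{m}{k}(T+\eta)+\tfrac{(T+\eta)^{2}}{2}\big)\|\lambda^{-1}z\|^{2}+\tfrac12\|g\|^{2}$. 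Inserting this into the first inequality $\|g\|^{2}\ge\frac{m}{k}\|\lambda^{-1}z\|^{2}-D\|\nabla z\|^{2}$ gives $\big(1+\tfrac{D}{2\underline{M}}\big)\|g\|^{2}\ge\big[\tfrac{m}{k}\big(1-\tfrac{D(T+\eta)}{\underline{M}}\big)-\tfrac{D(T+\eta)^{2}}{2\underline{M}}\big]\|\lambda^{-1}z\|^{2}$. Fixing $\mu_{0},\eta_{0}$ so small that $D(\mu_{0}+\eta_{0})\le\tfrac12\underline{M}$ makes $1-\tfrac{D(T+\eta)}{\underline{M}}\ge\tfrac12$, and then for $m$ large the bracket is $\ge\tfrac{m}{4k}$; hence $\|\lambda^{-1}z\|^{2}\le c\,\|g\|^{2}$ with $c$ independent of $m$ (once $m\ge1$), and feeding this back into the energy estimate gives $\|\nabla z\|^{2}\le c'\,\|g\|^{2}$ with $c'$ independent of $m$. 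Adding the two bounds and translating back to $v$ yields \eqref{eq:3.10} with $K:=c+\tfrac{c'}{2}$ independent of $m$.

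The step I expect to be the real obstacle is the second one: one must check that every boundary contribution — in $x$ on $\partial\Omega$, and in $t$ at $0$ and $T$ — genuinely vanishes (which is precisely what the definition of $P_{T}(\Omega)$ is designed for), and, more delicately, that the smallness of $T+\eta$ is used \emph{only} to absorb the $\partial_{t}a$-error into the coercive Dirichlet term, so that $K$ inherits no hidden dependence on $m$. Handling the possibly non-symmetric, $v$-dependent tensor $a$ also demands care: the sesquilinearity hypothesis together with $a_{ij}\in C^{1}(\overline{Q_{T}})$ are exactly what make $\partial_{t}\big(a(x,t;v;\nabla v)\big)$ bounded on $Q_{T}$ — so that $D$ is finite and depends only on those bounds — and legitimize the energy identity for $\langle a\nabla z,\nabla z\rangle$ invoked above.
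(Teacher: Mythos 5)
Your proposal is correct and is essentially the proof the paper itself intends: the paper gives no self-contained argument for \cref{lem:3.1}, deferring to \cite[Chapter 6]{Fried82} and \cite{Khoa2018}, whose technique is exactly your weighted-energy computation --- conjugate with $z=\lambda^{-m/k}v$, discard $\|z_t\|^2+\|\mathcal{S}z\|^2$ after expanding the square, integrate the cross term by parts in $x$ and $t$ using that $z$ vanishes on $\partial\Omega$ and at $t\in\{0,T\}$ (which yields the $\frac{m}{k}\|\lambda^{-m/k-1}v\|^2$ gain and the $\partial_t a$ error), and then couple this with the plain energy identity so that the smallness of $T+\eta$ is used only to absorb the $\partial_t a$ term into the elliptic one, the factor $m/k$ cancelling when you feed the $\|\lambda^{-1}z\|^2$ bound back in, so $K$ is indeed $m$-independent. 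The one clause to temper is your claim that the antisymmetric part of $a$ ``contributes only terms that vanish'' in $\langle a\nabla z,\nabla z_t\rangle$: that identity genuinely requires the symmetric (or symmetrizable) structure of the tensor that the lemma's hypotheses are meant to encode, a point the paper's statement glosses over in exactly the same way.
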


Let $u$ and $v$ be the two solutions of the backward problem \eqref{Pb1}-\eqref{terminal}
in $[W_{T}\left(\Omega\right)]^{N}$. The difference
system for $w = u - v$ reads as
\begin{align}
w_{t}+\nabla\cdot\left(-a\left(x,t;w;\nabla w\right)\nabla w\right) & =F\left(x,t;u;\nabla u\right)-F\left(x,t;v;\nabla v\right)\label{eq:3.1} \\
& +\nabla\cdot\left(a\left(x,t;u;\nabla u\right)\nabla u\right)-\nabla\cdot\left(a\left(x,t;v;\nabla v\right)\nabla v\right)\nonumber \\
& -\nabla\cdot\left(a\left(x,t;w;\nabla w\right)\nabla w\right),\nonumber
\end{align}
endowed with the zero Dirichlet boundary condition and the zero terminal
condition.

Under the assumptions that $a,F$ are Lipschitz-continous with respect to the nonlinear arguments $\mathbf{p},\mathbf{q}$ and that $a$ satisfies the strict ellipticity condition, we can find a positive constant
$C$ such that from (\ref{eq:3.1}) the following differential inequality holds
\begin{equation}
\left|w_{t}+\nabla\cdot\left(-a\left(x,t;w;\nabla w\right)\nabla w\right)\right|^{2}\le C\left(\left|w\right|^{2}+\left|\nabla w\right|^{2}\right).\label{eq:3.3}
\end{equation}

Observe that $w\in [P_{T}\left(\Omega\right)]^{N}$, we can obtain the uniqueness result in $[P_{T}\left(\Omega\right)]^{N}$ for \eqref{Pb1} by using \eqref{eq:3.10}, \eqref{eq:3.3} and by choosing appropriately small values of $\mu_{0}$ and $\eta_{0}$.

\subsubsection*{Nonlocal diffusion} 
We could meliorate the existence result (cf. \cref{existencetheorem}) when the diffusion $a$ in the system \eqref{Pb1} is of the following physical types:
\begin{itemize}
	\item $a=a(x,t)$ typically accounting for the anisotropic diffusion and possibly taxis processes;
	\item $a=a\left(t;u\right)=\max\left\{ \theta_{0},\theta_{1}+\left|\int_{\Omega}u\left(x,t\right)dx\right|\right\} +\theta_{2}$ for some $\theta_{0},\theta_{1},\theta_{2}>0$. The diffusion in this form is controlled by the local movements of species involved in the evolution equation (see e.g. \cite{AADF16,TAKL17} for the concrete biological motivation of this equation);
	\item $a=a\left(t;\left\Vert \nabla u\right\Vert _{L^{2}\left(\Omega\right)}^{2}\right)=\theta_{3}+\int_{\Omega}\left|\nabla u\right|^{2}dx$ for some $\theta_3 >0$  indicating a Kirchhoff-type diffusion model for e.g. flows through
	porous media.
\end{itemize}
Using the same argument in \cref{existencetheorem}, it is worth mentioning that the convergence results obtained in \eqref{eq:4.13} and \eqref{eq:4.16-3} are sufficient to passing to the limit in the diffusion term involving the aforementioned forms. Consequently, the existence result remains true in these cases for any spatial dimensions $d$. However, this technique is not valid for the $p$-Laplacian equation inspired from the power-law type of Ohm's law in conductivity of electricity, which reads as
\begin{equation}
u_{t}-\nabla\cdot\left(\left|\nabla u\right|^{p-2}\nabla u\right)=F\left(u\right) \quad \text{for } p \ge 2,\label{eq:Ohm}
\end{equation}
due to the failure of passage to the limit. When $d=1$, there is a possibility of proving this solvability by the embedding $H_0^1(\Omega)\subset L^{\infty}(\Omega)$. 

Since we use the boundedness of the diffusion term as a key point in the convergence analysis, a slight improvement of  \cref{thm:Estimate1} and \cref{thm:Estimate2} can be obtained when $a$ is dependent of the gradient. In fact, assuming the source condition (compared to \eqref{eq:4.37-1})
\begin{align}
u\in C\left(\left[0,T\right];\mathbb{W}\right)\cap L^{\infty}(0,T;W^{1,\infty}(\Omega))\cap C^{1}\left(0,T;L^{2}\left(\Omega\right)\right),\label{sourcecond}
\end{align}
one could suppose that $\underline{M}\ge \eta \left\Vert \nabla u\right\Vert _{L^{\infty}\left(Q_T\right)}$ for some $\eta >0$ sufficiently small, somewhat similar to the concept of large diffusion  in terms of $A$, to gain similar error bounds. Technically, the reason behind this assumption is to preserve the positivity of the gradient term in \eqref{eq:4.32}. In some physical problems, the small diffusion $a$ would fit this circumstance because $\overline{M}$ now can be taken sufficiently large and then choosing $\underline{M}$ large is possible. 

\subsubsection*{Locally Lipschitz-continuous nonlinearities}
From now on, we extend the convergence analysis when the source term $F$ locally depends on $u$ and $\nabla u$. In this scenario, we need  the estimate \eqref{eq:2.1} for the cut-off function $F_{\ell}$ introduced in \cref{remarkphu}. Essentially, there are two main  difficulties in the proofs.
\begin{itemize}
	\item When exploring the difference equation in proof of \cref{thm:Estimate1} we confront with the difference term $F_{\ell^{\varepsilon}}(u_{\beta}^{\varepsilon};\nabla u_{\beta}^{\varepsilon})-F(u;\nabla u)$. Thus, estimating $\mathcal{I}_{3}$ in \eqref{eq:4.25} would be problematic.
	\item This moment the constant $C_2$ in \eqref{eq:err1} and given by \eqref{eq:CC2} would depend on $\ell^{\varepsilon}$. Observe that the behavior of $\ell^{\varepsilon}$ should be increasing (when $\varepsilon\to 0$) as it approximates the source function $F$ in \eqref{eq:2.1-1}. Therefore, this parameter must be formulated in a clear manner to ensure the convergence of our QR scheme.
\end{itemize}

These issues are really needed to elucidate because, as particularly mentioned in \cref{subsec:background}, the local Lipschitz continuity of $F$ is encountered in most of the significant equations in real-life applications. Here we sketch out some essential ideas that we can adapt to the proof of \cref{thm:Estimate1}. Note that here we need the aid of the source condition \eqref{sourcecond}.

At first, we choose the cut-off parameter $\ell^{\varepsilon}>0$
such that
\begin{align}\label{choice1}
\ell^{\varepsilon}\ge\left\Vert u\right\Vert _{L^{\infty}\left(0,T;W^{1,\infty}\left(\Omega\right)\right)}.
\end{align}
This way we solve the first issue because $F_{\ell^{\varepsilon}}\left(x,t;u;\nabla u\right)=F\left(x,t;u;\nabla u\right)$; cf.
(\ref{eq:2.1-1}).

Taking into account the Lipschitz constant
$L_{F}\left(\ell^{\varepsilon}\right)>0$ and choosing an appropriate
Young inequality, we get the estimate of $\mathcal{I}_{3}$ as follows:
\begin{align}\label{eq:I3}
\mathcal{I}_{3} & \ge -\frac{e^{2\rho_{\beta}\left(t-T\right)}\underline{M}}{8L_{F}^{2}\left(\ell^{\varepsilon}\right)}\left\Vert F_{\ell^{\varepsilon}}\left(u_{\beta}^{\varepsilon};\nabla u_{\beta}^{\varepsilon}\right)-F_{\ell^{\varepsilon}}\left(u;\nabla u\right)\right\Vert _{L^{2}\left(\Omega\right)}^{2}
-\frac{2L_{F}^{2}\left(\ell^{\varepsilon}\right)}{\underline{M}}\left\Vert w_{\beta}^{\varepsilon}\right\Vert _{L^{2}\left(\Omega\right)}^{2} \\
& \ge -\frac{\underline{M}}{4}\left(\left\Vert w_{\beta}^{\varepsilon}\right\Vert _{L^{2}\left(\Omega\right)}^{2}+\left\Vert \nabla w_{\beta}^{\varepsilon}\right\Vert _{\left[L^{2}\left(\Omega\right)\right]^{d}}^{2}\right)
-\frac{L_{F}^{2}\left(\ell^{\varepsilon}\right)}{\underline{M}}\left\Vert w_{\beta}^{\varepsilon}\right\Vert _{L^{2}\left(\Omega\right)}^{2}.\nonumber
\end{align}
Henceforward, \eqref{eq:4.32} remains the same when we put
$\rho_{\beta}= C_1 \log(\gamma(T,\beta)) + \frac{\underline{M}+1}{4}
+\frac{L_F^2 (\ell^{\varepsilon})}{\underline{M}}$. With this choice, the constant $C_2$ in \eqref{eq:CC2} is $\varepsilon$-dependent and given by
\begin{equation}
C_{2}\left(\ell^{\varepsilon}\right):=\frac{\underline{M}+1}{4}+\frac{L_{F}^{2}\left(\ell^{\varepsilon}\right)}{\underline{M}}.\label{eq:C2}
\end{equation}

Now observe \eqref{eq:err1} with this new $C_{2}$ in \eqref{eq:C2} and have in mind that the error estimate at $t=0$ (cf. \cref{thm:Estimate2}) is of the order $\mathcal{O}\left(\log^{-\frac{1}{2}}(\gamma(T,\beta))\right)$. We only need to find a fine control of the term  $e^{\frac{\left(T-t\right)L_{F}^{2}\left(\ell^{\varepsilon}\right)}{\underline{M}}}$ in such a way that its growth does not ruin the logarithmic rate of convergence. To do so, our strategy is the following: We take 
\begin{align}\label{choice2}
\varrho:=\varrho\left(\beta\right)=\sqrt{\frac{\underline{M}}{T}\log\left(\log^{\kappa}\left(\gamma\left(T,\beta\right)\right)\right)}>0,
\end{align}
for some $\varepsilon$-independent constant $\kappa>0$ being selected later. Then, we have
\begin{equation}
\lim_{\varepsilon\to0^{+}}\varrho\left(\beta\right)=\infty.\label{eq:4.33}
\end{equation}
If we choose $\Lambda^{\beta}:=\sup L_{F}^{-1}\left\{ \left(-\infty,\varrho\left(\beta\right)\right]\right\} $,
then $L_{F}\left(\Lambda^{\beta}\right)=\varrho\left(\beta\right)$
and we also obtain
\begin{equation}
e^{\frac{\left(T-t\right)L_{F}^{2}\left(\Lambda^{\beta}\right)}{\underline{M}}}\le\log^{\kappa}\left(\gamma\left(T,\beta\right)\right).\label{eq:4.34}
\end{equation}

Note also that by (\ref{eq:4.33}), $L_{F}^{-1}\left\{ \left(-\infty,\varrho\left(\beta\right)\right]\right\} \ne0$
and $\Lambda^{\beta}\in\left(0,\infty\right)$ is well-defined. Moreover,
we can prove that  $\lim_{\varepsilon\to0^{+}}\Lambda^{\beta}=\infty$.
Indeed, we suppose that there exists $C>0$ such that $\Lambda^{\beta}\le C$
for $\beta$ near the zero point. Since $L_{F}$ is non-decreasing with respect to $\ell^{\varepsilon}$,
it holds $L_{F}\left(C\right)\ge L_{F}\left(\Lambda^{\beta}\right)=\varrho\left(\beta\right)$,
which contradicts the fact already known (\ref{eq:4.33}). Now, for $\ell^{\varepsilon}\in\left(0,\Lambda^{\beta}\right]$ we
deduce that
\[
e^{\frac{\left(T-t\right)L_{F}^{2}\left(\ell^{\varepsilon}\right)}{\underline{M}}}\le\log^{\kappa}\left(\gamma\left(T,\beta\right)\right),
\]
resulted from (\ref{eq:4.34}). This also indicates that we have identified
a fine upper bound of the $\ell^{\varepsilon}$-dependent Lipschitz
constant $L_{F}$, and the error estimate \eqref{eq:err1} now becomes
\begin{align}
& \left\Vert u_{\beta}^{\varepsilon}\left(\cdot,t\right)-u\left(\cdot,t\right)\right\Vert^{2} _{L^{2}\left(\Omega\right)}+2\underline{M}\int_{t}^{T}\left\Vert \nabla u_{\beta}^{\varepsilon}\left(\cdot,s\right)-\nabla u\left(\cdot,s\right)\right\Vert^{2} _{\left[L^{2}\left(\Omega\right)\right]^{d}}ds\label{eq:4.38}\\
& \le\log^{2\kappa}\left(\gamma\left(T,\beta\right)\right)\gamma^{2C_{1}\left(T-t\right)}\left(T,\beta\right)\left(\varepsilon^{2}+2TC_{0}^{2}\gamma^{-2}\left(T,\beta\right)\left\Vert u\right\Vert^{2} _{C\left(\left[0,T\right];\mathbb{W}\right)}\right)e^{2TC_{3}},\nonumber 
\end{align}
where $C_{3}:=\frac{\underline{M}+1}{4}$ is no longer dependent of $\ell^{\varepsilon}$.

Similar to proof of \cref{thm:Estimate2}, we inherit from \eqref{eq:4.38} to gain the error estimate at $t=0$ with the order $\mathcal{O}\left(\log^{\kappa-\frac{1}{2}}(\gamma(T,\beta))\right)$. Hence, together with \eqref{eq:4.38} we choose $\kappa:=\kappa(t)=\min\left\{C_1 t,\frac{1}{2}\right\}>0$ to complete the convergence analysis in this case. On top of this, the choice of the cut-off parameter can be summarized by \eqref{choice1} and \eqref{choice2}, working with sufficiently small values of $\varepsilon$.

\subsubsection*{No-flux boundary condition}
Since our problem \eqref{Pb1}-\eqref{terminal} is also present in population dynamics, the zero Neumann condition should be analyzed. In this case, we associate the regularized problem \eqref{regune} with the boundary condition $-a\nabla u^{\varepsilon} \cdot \text{n} = 0$, taking the place of the zero Dirichlet boundary condition in \eqref{reguproblem-1}. Under this setting, the techniques used in the proofs of our main results can be applied in the same manner, focusing on the same structure of the weak formulation we have in \eqref{eq:4.2} (where the test function $\psi$ now belongs to the closed subspace of $H^1(\Omega)$ that satisfies the zero Neumann boundary condition) and the key equation \eqref{eq:4.25} for the convergence analysis. Accordingly, the rates of convergence derived in \cref{thm:Estimate1} and \cref{thm:Estimate2} remain unchanged. Moreover, the strong convergence on the boundary is confirmed for $0<t<T$ by the following trace inequality: 
\begin{align*}
& \int_{t}^{T}\left\Vert u_{\beta}^{\varepsilon}\left(\cdot,s\right)-u\left(\cdot,s\right)\right\Vert _{[L^{2}\left(\partial\Omega\right)]^{N}}^{2}ds\\
& \le C_{\Omega}\left(\left\Vert u_{\beta}^{\varepsilon}-u\right\Vert _{[C\left(\left[t,T\right];L^{2}\left(\Omega\right)\right)]^{N}}^{2}+\int_{t}^{T}\left\Vert \nabla\left(u_{\beta}^{\varepsilon}-u\right)\left(\cdot,s\right)\right\Vert _{[L^{2}\left(\Omega\right)]^{Nd}}^{2}ds\right),
\end{align*}
which yields the same rate as in \cref{thm:Estimate1}.

\subsection{Possible future generalizations of above results}
\subsubsection*{Gevrey class} It is worth noting that the property \eqref{prop} remains true up to a compact Riemannian manifold, which is generally called the Sturm-Liouville decomposition. As a prominent example, the standard eigen-elements for a $d$-torus $\mathbb{T}^{d}=\mathbb{R}^{d}/\left(2\pi\mathbb{Z}\right)^{d}$ are
\[
\phi_{p}\left(x\right)=\prod_{j=1}^{d}e^{2\pi ip_{j}x_{j}},\quad\mu_{p}=\sum_{j=1}^{d}\left(2\pi p_{j}\right)^{2},\quad p_{j}\in\mathbb{N},1\le j\le d,\; i=\sqrt{-1}.
\]
In this scenario, Gevrey classes are popular in micro-local analysis for the propagation of wavefront set and in the study of analytic regularity for nonlinear evolution equations with periodic boundary data. A famous result of the Gevrey solvability for nonlinear analytic parabolic equations is recalled in an example of \cref{appendix1}. Here, our discussions focus on the preasymptotic error bounds for \emph{approximation numbers} of periodic Gevrey-type spaces of analytic functions with connection to the Galerkin method.

For $0<\alpha,p,q<\infty$, we denote by $\mathbb{G}_{\alpha}^{p,q}(\mathbb{T}^{d})$ the Gevrey space that consists of all functions in $C^{\infty}(\mathbb{T}^{d})$ and satisfies
\[
\left\Vert u\right\Vert _{\mathbb{G}_{\alpha}^{p,q}(\mathbb{T}^{d})} := \left(\sum_{k\in \mathbb{Z}^{d}}\text{exp}\left(2\alpha \left\Vert k\right\Vert^{q}_{p} \right)\hat{u}_{k}\right)^{1/2} < \infty,
\]
where $\hat{u}_{k}$  denotes the Fourier coefficient of $u$ with respect to the frequency vector $k\in \mathbb{Z}^{d}$. By this definition, the norm $\left\Vert e^{\overline{M}T\left(-\Delta\right)}u\right\Vert _{L^{2}\left(\mathbb{T}^{d}\right)}$ in \cref{exampleope} is essentially $\left\Vert u\right\Vert_{\mathbb{G}_{\overline{M}T}^{2,2}(\mathbb{T}^{d})}$. For $q\in (0,1)$, this space is the classical Gevrey classes that contain non-analytic functions, whilst for $q \ge 1$ all functions are real-analytic therein.

In approximation theory for Hilbert spaces, approximation numbers represent the worst-case error obtained when approximating a class of functions by projecting them onto the optimal finite-dimensional subspace. The basic reason lies in the information-based complexity  that requires the rank $n\in \mathbb{N}$ of the optimal projection operator is sufficiently large ($n > 2^{d}$) to gain the classical error bounds, which is not substantially practical for high dimensions. Therefore, approximation numbers can be an excellent candidate to handle this context. In a nutshell, the connection between such approximation numbers and Galerkin schemes for a classical variational problem, where a parabolic problem can  be involved, is clearly present in \cite[Subsection 1.5]{KMU16} with references cited therein for a background of Gevrey classes.

\begin{definition}[Approximation numbers]
	Let $X$ and $Y$ be two Banach spaces. The norm of an operator $\mathcal{A}: X \to Y$ is denoted by $\left\Vert \mathcal{A}\right\Vert_{X\to Y}$. The $n$th approximation number ($n\in\mathbb{N}$) of an operator $\mathcal{T}: X \to Y$ is defined by
	\[
	{a}_{n}\left(\mathcal{T}:X\to Y\right):=\inf_{\text{rank}\left(\mathcal{A}\right)<n}\left\Vert \mathcal{T}-\mathcal{A}\right\Vert _{X\to Y}.
	\]
\end{definition}

Taking into account the Gevrey space that have been mentioned above,  the approximation numbers of the embedding $\text{Id}:\mathbb{G}_{\overline{M}T}^{2,2}(\mathbb{T}^{d}) \to L^2(\mathbb{T}^{d})$ are bounded by
\[
n^{-\frac{c_{1}\overline{M}T}{\log_{2}\left(1+d/\log_{2}\left(n\right)\right)}}\le a_{n}\left(\text{Id}:\mathbb{G}_{\overline{M}T}^{2,2}\left(\mathbb{T}^{d}\right)\to L^{2}\left(\mathbb{T}^{d}\right)\right)\le n^{-\frac{c_{2}\overline{M}T}{\log_{2}\left(1+d\right)}},
\]
for $d\le n \le 2^d$ and $c_1,c_2 >0$.
This rigorous estimate is almost identical to the preasymptotic estimate for approximation numbers of the classical embeddings $\text{Id}: H^1\left(\mathbb{T}^{d}\right) \to L^2\left(\mathbb{T}^{d}\right)$, albeit $\mathbb{G}_{\overline{M}T}^{2,2}(\mathbb{T}^{d})$ obviously contains smoother functions than $H^1\left(\mathbb{T}^{d}\right)$. On the other hand,  the approximation numbers for the embedding $\text{Id}: \mathbb{G}_{\overline{M}T}^{2,2}(\mathbb{T}^{d}) \to H^1\left(\mathbb{T}^{d}\right)$ are asymptotically identical when $1\le n\le d$, while for the embedding $\text{Id}: W^{1,\infty}(\mathbb{T}^{d}) \to L^2(\mathbb{T}^{d})$, they are completely identical whenever $n \le 2^d$.

Eventually, all these numbers indicate that there is a possibility to choose a combination of an optimal dimensional subspace and a linear finite element algorithm such that an approximate numerical solution by Galerkin methods is a good candidate in $L^2$ and $H^1$ for the true solution satisfying \eqref{eq:dku}. Consequently, the worst-case \emph{a priori} error for the (low) $n$-dimensional subspace in this context behaves like that of the standard finite element methods (FEMs).

\subsection{Concluding remarks}
We have extended a modified quasi-reversibility (QR) method for backward quasi-linear
parabolic systems with noise. Several rates of convergence have been derived, especially the rigorous
error estimates in $L^{r}\left(\Omega\right)$ ($r\ge2$) and $H^{1}\left(\Omega\right)$, albeit many open questions remain unsolved. Although the spectral method that takes into account Duhamel’s principle is not used, settings for filter regularized operators still  rely on existence of the space $\mathbb{W}$, which usually plays a role as a class of Gevrey spaces in the existing trend of regularization for time-reversed nonlinear parabolic equations.

Our present contribution gives rise to some further interesting questions.
Recently, we have only done with several error estimates which indicate
the strong convergence of the regularization scheme. This typical
convergence is not expected to be applied in the stochastic setting,
but it can be designed to obtain an approximate solution
in the FEM framework. In this sense, our theoretical
analysis can be a key ingredient to establish regularized multiscale
FEM schemes which deal with models in certain complex domains because
spatial environments where population densities take place are usually
not nice (e.g. porous media). Other
open perspectives include the effective iterative QR method
and also the presence of the Robin-type boundary condition describing
e.g. the surface reaction in more complex scenarios.

\appendix

\section{Applications to existing models}\label{appendix1}
Here, we examine four types of backward problems arising in many physical applications to show the applicability of our theoretical
analysis. In order to show existing arguments on the \emph{a priori} information \eqref{sourcecond} where $\mathbb{W}$ stands for a class of Gevrey spaces demonstrated in \cref{exampleope}, we specify below the possible regularity assumptions for different models chosen from simple to complex, based on the analysis of the forward models. Note that $1 \le d \le 3$ are only considered due to the practical meaning. 

\subsection{Fisher-Burger equation} In a finite interval $[0,l]$ with periodic boundary condition, one concerns the following equation:
\[
u_{t}+Cuu_{x}=Du_{xx}+Bu\left(1-u\right),
\]
with $B,C,D$ being positive constants, for simplicity.

This problem is performed as a combination between the classical Fisher and Burger equations. Here we can further consider the real analytic cases with respect to $u$ of the nonlinear $F$ which imply several types of modelling interactions between particles. We know that in \cite{FT07} the authors proved the local weak solvability of the forward problem. In this sense, if the initial  condition is sufficiently smooth, viz. $u_{0}\in H^{1}(\Omega)$, then we obtain a unique solution $u\in \mathbb{G}^{2,2}_{t}$ for any $t\in [0,T^{*}]$ with $T^{*}$ sufficiently small. This not only verifies that the Gevrey regularity on the true solution could be valid in some certain models, but also agrees with the mild restriction of time in the convergence results.

\subsection{$p$-Laplacian equation} In a bounded domain with a H\"older boundary, we take into account the equation \eqref{eq:Ohm} with the zero Dirichlet boundary condition. Cf. \cite{Na17}, we can obtain the classical solution in $L^{\infty}(0,T;W_{0}^{1,\infty}(\Omega))$ when $u_{0}\in W^{1,\infty}(\Omega)$. Together with the Fisher-Burger equation, we remark that these forward problems have interesting phenomena including e.g. profiles of extinction and blow-up in finite time, the instantaneous shrinking of the support from the diffusion coefficient. Depending on the situation one may need appropriate choices of the auxiliary parameter $\rho_{\varepsilon}$ involved in the regularized problem to keep track of the arisen phenomena. Therefore, rigorous analysis of the regularized problem \eqref{reguproblem}-\eqref{reguproblem-1} will be considered in the forthcoming works.

\subsection{Gray-Scott-Klausmeier model} Based on the one-dimensional setting with $\Omega = \mathbb{R}$ in \cite{MRS14}, we set $u = (u_1 , u_2)$ with $u_1 > 0$ to guarantee the positive-definite diffusion $a(u)$. Then the closed-form nonlinearities are
\[
a\left(u\right)=\begin{pmatrix}2u_{1} & 0\\
0 & D
\end{pmatrix},\quad F\left(u,u_{x}\right)=\begin{pmatrix}Cu_{1x}+A\left(1-u_{1}\right)-u_{1}u_{2}^{2}\\
-Bu_{2}+u_{1}u_{2}^{2}
\end{pmatrix},
\]
where the involved parameters $A,B,C,D$ are positive.

This model describes the interaction between water $u_1$ and plant biomass $u_2$ in semiarid landscapes. The local well-posedness in $H^2(\mathbb{R})$ (cf. \cite[Theorem 2.2]{MRS14}) enjoys the possibility of taking $W^{1,\infty}$ in \eqref{sourcecond} due to the embedding $W^{1,1}(\mathbb{R}) \subset L^{\infty}(\mathbb{R})$.

\subsection{Shigesada-Kawasaki-Teramoto model}
In a three-dimensional setting with no-flux boundary condition, we consider
\[
a\left(u\right)=\begin{pmatrix}a_{10}+2a_{11}u_{1}+a_{12}u_{2} & a_{12}u_{1}\\
a_{21}u_{2} & a_{20}+2a_{22}u_{2}+a_{21}u_{1}
\end{pmatrix},
\]
where the non-negative coefficients $a_{ij}$ satisfy $8a_{11} \ge a_{12}$ and $8a_{22} \ge a_{21}$ to fulfill the positive-definiteness of diffusion. The source term is taken as the Lotka-Volterra functions, which reads as
\[
F\left(u\right)=\begin{pmatrix}\left(b_{10}-b_{11}u_{1}-b_{12}u_{2}\right)u_{1}\\
\left(b_{20}-b_{21}u_{1}-b_{22}u_{2}\right)u_{2}
\end{pmatrix},
\]
where the coefficients $b_{ij}$ are non-negative.

This famous model plays a vital role in population dynamics for multi-species systems in which self- and cross-diffusion effects are participated. An included example is the Keller-Segel model for cell aggregation, structured by setting $a_{10}=a_{20}=1$, $a_{11}=a_{12}=a_{21}=a_{22}=0$, $a_{12}= -1$ with $F(u) = \left(0,u_{1}- u_{2}\right)^{T}$. It is important to see that $a$ does not need to be symmetric in this context. Concerning the forward problem, the existence of bounded weak solution, i.e. $u_{i}\in L^{\infty}(0,T;L^{\infty}(\Omega))$, is proven in \cite{Gali12} if the initial data $u_{i}^{0} \in L^{\infty}(\Omega)$ for $i=1,2$. Moreover, if $\nabla u_{i} \in L^{\infty}(0,T;L^{\infty}(\Omega))$, the uniqueness result is obtained. Essentially, observe that we can adapt the \emph{a priori} argument $L^{\infty}(0,T;W^{1,\infty}(\Omega))$ in \eqref{sourcecond} to this model.

\section*{Acknowledgments}
The authors would like to thank the anonymous referee(s) and the handling editor(s) for fruitful  comments  through  the significant improvement  of  this  paper. V.A.K acknowledges Prof. Daniel Lesnic (Leeds, UK) and Prof. Thorsten Hohage (G\"ottingen, Germany) for recent supports to his research inception.

\bibliographystyle{siamplain}
\bibliography{mybib}

\begin{thebibliography}{10}

\bibitem{AADF16}
{\sc R.~Almeida, S.~Antontsev, J.~Duque, and J.~Ferreira}, {\em A
  reaction-diffusion model for the non-local coupled system: existence,
  uniqueness, long-time behaviour and localization properties of solutions},
  IMA Journal of Applied Mathematics, 81 (2016), pp.~344--364.

\bibitem{BE89}
{\sc J.~Bebernes and D.~Eberly}, {\em Mathematical Problems from Combustion
  Theory}, vol.~83 of {A}pplied {M}athematical {S}ciences, Springer-Verlag,
  1989.

\bibitem{BS04}
{\sc G.~Bluman and V.~Shtelen}, {\em Nonlocal transformations of {K}olmogorov
  equations into the backward heat equation}, Journal of Mathematical Analysis
  and Applications, 291 (2004), pp.~419--437.

\bibitem{BD09}
{\sc L.~Bourgeois and J.~Darde}, {\em About stability and regularization of
  ill-posed elliptic {C}auchy problems: the case of {L}ipschitz domains},
  Applicable Analysis, 89 (2009), pp.~1745--1768.

\bibitem{BD10}
{\sc L.~Bourgeois and J.~Darde}, {\em A duality-based method of
  quasi-reversibility to solve the {C}auchy problem in the presence of noisy
  data}, Inverse Problems, 26 (2010).
\newblock 095016.

\bibitem{CGT17}
{\sc V.~Calvez, L.~Gosse, and M.~Twarogowska}, {\em Travelling chemotactic
  aggregates at mesoscopic scale and bistability}, SIAM Journal on Applied
  Mathematics, 77 (2017), pp.~2224--2249.

\bibitem{CKP09}
{\sc H.~Cao, M.~V. Klibanov, and S.~V. Pereverzev}, {\em A {C}arleman estimate
  and the balancing principle in the quasi-reversibility method for solving the
  {C}auchy problem for the {L}aplace equation}, Inverse Problems, 25 (2009).
\newblock 035005.

\bibitem{Cara76}
{\sc A.~Carasso}, {\em Error bounds in the final value problem for the heat
  equation}, SIAM Journal on Mathematical Analysis, 7 (1976), pp.~195--199.

\bibitem{Cara77}
{\sc A.~Carasso}, {\em Computing small solutions of {B}urgers' equation
  backwards in time}, Journal of Mathematical Analysis and Applications, 59
  (1977), pp.~169--209.

\bibitem{Cara94}
{\sc A.~S. Carasso}, {\em Overcoming {H}\"older continuity in ill-posed
  continuation problems}, SIAM Journal on Numerical Analysis, 31 (1994),
  pp.~1535--1557.

\bibitem{Cara13}
{\sc A.~S. Carasso}, {\em Reconstructing the past from imprecise knowledge of
  the present: {E}ffective non-uniqueness in solving parabolic equations
  backward in time}, Mathematical Methods in the Applied Sciences, 36 (2013),
  pp.~249--261.

\bibitem{Cara78}
{\sc A.~S. Carasso, J.~G. Sanderson, and J.~M. Hyman}, {\em Digital removal of
  random media image degradations by solving the diffusion equation backwards
  in time}, SIAM Journal on Numerical Analysis, 15 (1978), pp.~344--367.

\bibitem{CKK16}
{\sc R.~Cherniha, J.~R. King, and S.~Kovalenko}, {\em Lie symmetry properties
  of nonlinear reaction-diffusion equations with gradient-dependent
  diffusivity}, Communications in Nonlinear Science and Numerical Simulation,
  36 (2016), pp.~98--108.

\bibitem{CQ04}
{\sc M.~Chipot and P.~Quittner}, {\em Handbook of {D}ifferential {E}quations:
  {S}tationary {P}artial {D}ifferential {E}quations}, vol.~1, Elsevier, 2004.

\bibitem{CD00}
{\sc J.~Cholewa and T.~Dlotko}, {\em Global {A}ttractors in {A}bstract
  {P}arabolic {P}roblems}, Cambridge University Press, Cambridge, UK, 2000.

\bibitem{CK07}
{\sc C.~Clason and M.~V. Klibanov}, {\em The quasi-reversibility method for
  thermoacoustic tomography in a heterogeneous medium}, SIAM Journal on
  Scientific Computing, 30 (2007), pp.~1--23.

\bibitem{DNKV17}
{\sc V.~N. Doan, H.~T. Nguyen, V.~A. Khoa, and V.~A. Vo}, {\em A note on the
  derivation of filter regularization operators for nonlinear evolution
  equations}, Applicable Analysis, 97 (2018), pp.~3--12.

\bibitem{DAAF16}
{\sc J.~C.~M. Duque, R.~Almeida, S.~N. Antontsev, and J.~Ferreira}, {\em The
  {E}uler-{G}alerkin finite element method for a nonlocal coupled system of
  reaction-diffusion type}, Journal of Computational and Applied Mathematics,
  296 (2016), pp.~116--126.

\bibitem{EK14}
{\sc A.~Eladdadi and P.~Kim}, {\em Mathematical Models of Tumor-Immune System
  Dynamics}, {S}pringer {P}roceedings in {M}athematics \& {S}tatistics,
  Springer, 2014.

\bibitem{FT07}
{\sc A.~B. Ferrari and E.~S. Titi}, {\em Gevrey regularity for nonlinear
  analytic parabolic equations}, Communications in Partial Differential
  Equations, 23 (1998), pp.~424--448.

\bibitem{Fried82}
{\sc A.~Friedman}, {\em Partial {D}ifferential {E}quations of {P}arabolic
  {T}ype}, Prentice-Hall, 1982.

\bibitem{Gali12}
{\sc G.~Galiano}, {\em On a cross-diffusion population model deduced from
  mutation and splitting of a single species}, Computers \& Mathematics with
  Applications, 64 (2012), pp.~1927--1936.

\bibitem{HR08}
{\sc M.~Hieber and J.~Rehberg}, {\em Quasilinear parabolic systems with mixed
  boundary conditions on nonsmooth domains}, SIAM Journal on Mathematical
  Analysis, 40 (2008), pp.~292--305.

\bibitem{JBAJ16}
{\sc R.~Jaroudi, G.~Baravdish, F.~{\AA}str\"om, and B.~T. Johansson}, {\em
  Source localization of reaction-diffusion models for brain tumors}, in
  Pattern Recognition, B.~Rosenhahn and B.~Andres, eds., vol.~9796 of {L}ecture
  {N}otes in {C}omputer {S}cience, Springer, Cham, 2016, pp.~414--425.

\bibitem{Kaba08}
{\sc S.~I. Kabanikhin}, {\em Definitions and examples of inverse and ill-posed
  problems}, Journal of Inverse and Ill-posed Problems, 16 (2008),
  pp.~317--357.

\bibitem{KLW17}
{\sc N.~I. Kavallaris, J.~Lankeit, and M.~Winkler}, {\em On a degenerate
  nonlocal parabolic problem describing infinite dimensional replicator
  dynamics}, SIAM Journal on Mathematical Analysis, 49 (2017), pp.~954--983.

\bibitem{Khoa2018}
{\sc V.~A. Khoa, T.~T. Hung, and D.~Lesnic}, {\em Uniqueness result for an
  age-dependent reaction-diffusion problem}, ArXiv e-prints,  (2018),
  \url{https://arxiv.org/abs/1806.03442}.

\bibitem{KTDT17}
{\sc V.~A. Khoa, M.~T.~N. Truong, N.~H.~M. Duy, and N.~H. Tuan}, {\em The
  {C}auchy problem of coupled elliptic sine-{G}ordon equations with noise:
  {A}nalysis of a general kernel-based regularization and reliable tools of
  computing}, Computers and Mathematics with Applications, 73 (2017),
  pp.~141--162.

\bibitem{KNT17}
{\sc M.~Kirane, E.~Nane, and N.~H. Tuan}, {\em On a backward problem for
  multidimensional {G}inzburg-{L}andau equation with random data}, Inverse
  Problems, 34 (2017).
\newblock 015008.

\bibitem{Kliba05}
{\sc M.~V. Klibanov}, {\em Estimates of initial conditions of parabolic
  equations and inequalities via lateral {C}auchy data}, Inverse Problems, 22
  (2006).

\bibitem{Klibanov2015}
{\sc M.~V. Klibanov}, {\em Carleman estimates for the regularization of
  ill-posed {C}auchy problems}, Applied Numerical Mathematics, 94 (2015),
  pp.~46--74.

\bibitem{Kli15}
{\sc M.~V. Klibanov}, {\em Carleman weight functions for solving ill-posed
  {C}auchy problems for quasilinear {P}{D}{E}s}, Inverse Problems, 31 (2015).
\newblock 125007.

\bibitem{KS91}
{\sc M.~V. Klibanov and F.~Santosa}, {\em A computational quasi-reversibility
  method for {C}auchy problems for {L}aplace`s equation}, SIAM Journal on
  Applied Mathematics, 51 (1991), pp.~1653--1675.

\bibitem{KP68}
{\sc R.~J. Knops and L.~E. Payne}, {\em On the stability of solutions of the
  {N}avier-{S}tokes equations backwards in time}, Archive for Rational
  Mechanics and Analysis, 29 (1968), pp.~331--335.

\bibitem{KWH16}
{\sc C.~K\"onig, F.~Werner, and T.~Hohage}, {\em Convergence rates for
  exponentially ill-posed inverse problems with impulsive noise:}, SIAM Journal
  on Numerical Analysis, 54 (2016), pp.~341--360.

\bibitem{KMU16}
{\sc T.~K\"uhn, S.~Mayer, and T.~Ulrich}, {\em Counting via entropy: {N}ew
  preasymptotics for the approximation numbers of {S}obolev embeddings}, SIAM
  Journal on Numerical Analysis, 54 (2016), pp.~3625--3647.

\bibitem{Lan16}
{\sc J.~Lankeit}, {\em Equilibration of unit mass solutions to a degenerate
  parabolic equation with a nonlocal gradient nonlinearity}, Nonlinear
  Analysis, 135 (2016), pp.~236--248.

\bibitem{LL67}
{\sc R.~Latt\`es and J.~L. Lions}, {\em M\'ethode de {Q}uasi-r\'eversibilit\'e
  et {A}pplications}, Dunod, Paris, 1967.

\bibitem{MOEP16}
{\sc K.~J. Mahasa, R.~Ouifki, A.~Eladdadi, and L.~de~Pillis}, {\em Mathematical
  model of tumor-immune surveillance}, Journal of Theoretical Biology, 404
  (2016), pp.~312--330.

\bibitem{Mey11}
{\sc M.~Meyries}, {\em Local well-posedness and instability of travelling waves
  in a chemotaxis model}, Advances in Differential Equations, 16 (2011),
  pp.~31--60.

\bibitem{MRS14}
{\sc M.~Meyries, J.~D.~M. Rademacher, and E.~Siero}, {\em Quasi-linear
  parabolic reaction-diffusion systems: {A} user{'s} guide to well-posedness,
  spectra, and stability of travelling waves}, SIAM Journal on Applied
  Dynamical Systems, 13 (2014), pp.~249--275.

\bibitem{Mill67}
{\sc K.~Miller}, {\em Stabilized quasi-reversibility and other
  nearly-best-possible methods for non-well-posed problems}, in Symposium on
  Non-Well-Posed Problems and Logarithmic Convexity, vol.~316 of {L}ecture
  {N}otes in {M}athematics, Springer, 1967, pp.~161--176.

\bibitem{Na17}
{\sc M.~Nakao}, {\em On the initial-boundary value problem for some quasilinear
  parabolic equations of divergence form}, Journal of Differential Equations,
  263 (2017), pp.~8565--8580.

\bibitem{Nam10}
{\sc P.~T. Nam}, {\em An approximate solution for nonlinear backward parabolic
  equations}, Journal of Mathematical Analysis and Applications, 367 (2010),
  pp.~337--349.

\bibitem{2010}
{\sc P.~T. Nam, D.~D. Trong, and N.~H. Tuan}, {\em The truncation method for a
  two-dimensional nonhomogeneous backward heat problem}, Applied Mathematics
  and Computation, 216 (2010), pp.~3423--3432.

\bibitem{Pao92}
{\sc C.~V. Pao}, {\em Nonlinear Parabolic and Elliptic Equations}, Springer,
  1992.

\bibitem{RE97}
{\sc T.~Regi\'nska and L.~Eld\'en}, {\em Solving the sideways heat equation by
  a wavelet-{G}alerkin method}, Inverse Problems, 13 (1997), pp.~1093--1106.

\bibitem{RR06}
{\sc T.~Regi\'nska and K.~Regi\'nski}, {\em Approximate solution of a {C}auchy
  problem for the {H}elmholtz equation}, Inverse Problems, 22 (2006),
  pp.~975--989.

\bibitem{RS92}
{\sc J.~Rubinstein and P.~Sternberg}, {\em Nonlocal reaction-diffusion
  equations and nucleation}, IMA Journal of Applied Mathematics, 48 (1992),
  pp.~249--264.

\bibitem{SS01}
{\sc B.~Sandstede and A.~Scheel}, {\em On the structure of spectra of modulated
  travelling waves}, Mathematische Nachrichten, 232 (2001), pp.~39--93.

\bibitem{Show83}
{\sc R.~E. Showalter}, {\em Cauchy {P}roblem for {H}yper-parabolic {P}artial
  {D}ifferential {E}quations}, {T}rends in the {T}heory and {P}ractice of
  {N}on-{L}inear {A}nalysis, Elsevier, 1983.

\bibitem{TAKL17}
{\sc N.~H. Tuan, V.~V. Au, V.~A. Khoa, and D.~Lesnic}, {\em Identifcation of
  the population density of a species model with nonlocal diffusion and
  nonlinear reaction}, Inverse Problems, 33 (2017).
\newblock 055019.

\bibitem{TDMK15}
{\sc N.~H. Tuan, B.~T. Duy, N.~D. Minh, and V.~A. Khoa}, {\em H\"older
  stability for a class of initial inverse nonlinear heat problem in multiple
  dimension}, Communications in Nonlinear Science and Numerical Simulation, 23
  (2015), pp.~89--114.

\bibitem{TQ11}
{\sc N.~H. Tuan and P.~H. Quan}, {\em Some extended results on a nonlinear
  ill-posed heat equation and remarks on a general case of nonlinear terms},
  Nonlinear Analysis: Real World Applications, 12 (2011), pp.~2973--2984.

\bibitem{Wei92}
{\sc D.~Wei}, {\em Existence, uniqueness, and numerical analysis of solutions
  of a quasilinear parabolic problem}, SIAM Journal on Numerical Analysis, 29
  (1992), pp.~484--497.

\bibitem{Zlat01}
{\sc M.~C. Zlatescu, A.~TehraniYazdi, H.~Sasaki, J.~F. Megyesi, R.~A. Betensky,
  D.~N. Louis, and J.~G. Cairncross}, {\em Tumor location and growth pattern
  correlate with genetic signature in oligodendroglial neoplasms}, Cancer
  Research, 61 (2001), pp.~6713--6715.

\end{thebibliography}
\end{document}